\theoremstyle{plain}
\newtheorem{thm}[subsection]{Theorem}
\newtheorem{prop}[subsection]{Proposition}
\newtheorem{cor}[subsection]{Corollary}
\theoremstyle{definition}
\newtheorem{rk}[subsection]{Remark}
\newtheorem{definition}[subsection]{Definition}
\newtheorem{ex}[subsection]{Example}
\newtheorem{question}[subsection]{Question}
\numberwithin{equation}{section}
\newcommand{\OO}{{\mathcal O}}
\newcommand{\A}{{\mathcal A}}
\newcommand{\CC}{{\mathcal C}}
\newcommand{\Z}{\mathbb{Z}}
\newcommand{\mm}{\mathfrak{m}}
\newcommand{\Q}{\mathbb{Q}}
\newcommand{\C}{\mathbb{C}}
\newcommand{\PP}{\mathbb{P}}
\newcommand{\N}{\mathbb{N}}
\newcommand{\sat}{\mathrm{sat}}
\def\hess{\operatorname{hess}}
\DeclareMathOperator{\depth}{depth}
\DeclareMathOperator{\pd}{pd}
\DeclareMathOperator{\reg}{regularity}
\DeclareMathOperator{\indeg}{indeg}
\begin{document}

\title [The Hessian polynomial and the Jacobian ideal]
{The Hessian polynomial and the Jacobian ideal of a reduced hypersurface in $\PP^n$}

\author{Laurent Bus\'e}
\address{Universit\'e C\^ote d'Azur, Inria, 2004 route des Lucioles, 06902 Sophia Antipolis, France.}
\email{laurent.buse@inria.fr}

\author[Alexandru Dimca]{Alexandru Dimca$^{1}$}
\address{Universit\'e C\^ ote d'Azur, CNRS, LJAD, France and Simion Stoilow Institute of Mathematics,
P.O. Box 1-764, RO-014700 Bucharest, Romania}
\email{dimca@unice.fr}

\author[Hal Schenck]{Hal Schenck$^2$}
\address{Mathematics Department, Auburn University, AL 36849-5310, USA.}
\email{hks0015@auburn.edu}

\author[Gabriel Sticlaru]{Gabriel Sticlaru}
\address{Faculty of Mathematics and Informatics,
Ovidius University
Bd. Mamaia 124, 900527 Constanta,
Romania}
\email{gabriel.sticlaru@gmail.com }

\thanks{$^1$ This work has been partially supported by the Romanian Ministry of Research and Innovation, CNCS - UEFISCDI, grant PN-III-P4-ID-PCE-2020-0029, within PNCDI III. Schenck$^2$ was partially
  supported by NSF 1818646}

\subjclass[2010]{Primary 14J70, 32S05; Secondary 13D02, 32S22, 32S25}

\keywords{homogeneous polynomial, Hessian polynomial, Jacobian ideal, Castelnuovo-Mumford regularity, Milnor algebra}

\begin{abstract}  For a reduced hypersurface $V(f) \subseteq \mathbb{P}^n$ of degree $d$, the Castelnuovo-Mumford regularity of the Milnor
  algebra $M(f)$ is well understood when $V(f)$ is smooth, as well as when
  $V(f)$ has isolated singularities. We study the regularity of $M(f)$
  when $V(f)$ has a positive dimensional singular
  locus. In certain situations, we prove that the
  regularity is bounded by $(d-2)(n+1)$, which is the degree of the Hessian polynomial of
  $f$. However, this is not always the case, and we prove that in
  $\mathbb{P}^n$ the regularity of the Milnor algebra can grow
  quadratically in $d$.
\end{abstract}

\maketitle

\section{Introduction}
Let $S=\oplus_k S_k=\C[x_0,...,x_n]$ be the graded polynomial ring, where $S_k$ denotes
the vector space of degree $k$ homogeneous polynomials. For a
homogeneous polynomial $f \in S_d$, the Jacobian ideal $J_f$ is generated by
the partial derivatives of $f$, and the Milnor algebra $M(f)$ is the graded ring $S/J_f$.

From a geometric standpoint $M(f)$ is of interest because it
encodes the singular subscheme $\Sigma=\Sigma(f)$ of the projective
hypersurface $V(f) \subseteq  \PP^n$. When $V(f)$ is smooth, $M(f)$
is an Artinian complete intersection and plays a central role in the
Hodge theory of $V(f)$. A landmark result of Griffiths \cite{Griff}
shows that the Hodge numbers of $V(f)$ can be extracted from $M(f)$,
and recent work of Dimca \cite{DimcaHodge} shows that one can obtain
related results for an even dimensional nodal hypersurface. The Milnor algebra also has applications in physics, where it is known
as the Chiral ring \cite{CK}, in the study of Bernstein-Sato
polynomials (recent work of Walther \cite{Walther}), in the study of
multiplier ideals \cite{ELSV}, and in Torelli type theorems.

One case where $M(f)$ has been the object of intense investigation is when $V(f)$ is an arrangement of hypersurfaces. Of
course, in this setting $V(J_f)$ is of codimension two.
Even the simplest case, where $f$ is a product of distinct linear
forms, is highly nontrivial. A landmark result is Terao's theorem
\cite{Terao} relating the Cohen-Macaulay property of $J_f$ (often
referred to as {\em freeness}, because in this case the syzygy module
of $J_f$ is free) to the topology of the complement of the
arrangement. A second case where much is known about $M(f)$ is
when the singularities of $V(f)$ are isolated, see \cite{DP}.

Beyond the cases above, in general little is known about algebraic
properties of $M(f)$. Our aim in this paper is to investigate one of the fundamental
algebro-geometric invariants associated to $M(f)$-the {\em
  Castelnuovo-Mumford regularity}. The importance of regularity is that it measures, in a precise sense, the complexity of a finitely generated graded $S$-module $M$; it is determined by the ``shape'' of a minimal free resolution of $M$. For us, the module in question will be the Milnor algebra $M(f)$:
\begin{definition}
 Let
  \begin{equation*}
0 \to F_m \to \cdots \to F_0 \to M \to 0,
\end{equation*}
be a minimal graded free resolution of the graded $S$-module $M$, where
\[
  F_k=\bigoplus\limits_j S(-a_{k,j}) \mbox{ for }k=0, \ldots,m.
\]
By the Hilbert Syzygy Theorem, $m \leq n+1$ and $m=\pd M$ is the projective dimension of $M$. The {Castelnuovo-Mumford regularity} of $M$ is 
\[
 \reg M=\max_{i,j} \{a_{i,j}-i\}.
\]
\end{definition}
\noindent Two other central algebraic invariants are the Hilbert function and Hilbert polynomial:
\begin{definition}
For a finitely generated graded $S$-module $M$, the Hilbert function is 
$H(M,t)=\dim_{\C}M_t$, for $t \in \Z$. Hilbert proved that for $t \gg 0$, the Hilbert function is given by the Hilbert polynomial $P(M,t) \in \Q[t]$.
\end{definition}
$P(M(f),t)$ encodes information about $V(f)$; for example, if $V$ has
isolated singularities then $P(M(f),t)$ is a constant which is equal to
the sum of the Tjurina numbers at the singular points of $V(f)$. 
Castelnuovo-Mumford regularity and the Hilbert function are related via the stability threshold, defined as \[
  st(f)=\min \{q~~:~~  H(M(f),t)= P(M(f),t) \text{ for all } t \geq
  q\}.
\]
By \cite[Theorem 4.2]{Eis2}, $st(f) \leq \reg M(f) +m-n.$

When the singular subscheme $\Sigma$ is composed of finitely many
points, there is a sharp upper bound for the stability threshold
\cite{Cha}, but there are very few results when $\Sigma$ is of
positive dimension. A main motivation for our work comes from the
study of arrangements of hypersurfaces in $\mathbb{P}^n$; clearly when
$n \ge 3$ a hypersurface arrangement will have a positive dimensional
singular locus.

The square matrix of second order partial derivatives of $f$ is the
{\em Hessian} matrix, and we write $\hess_f$ to denote the determinant
of the Hessian. The polynomial $\hess_f$ has degree $T=(n+1)(d-2)$,
and it is easy to show that $\hess_f \in (J_f:\mathfrak{m})$ where
$\mathfrak{m}$ denotes the irrelevant ideal $(
x_0,\ldots,x_n)$. In fact, Spodzieja shows in \cite{Spo} (see
also Vasconcelos \cite{Vasc}) that for any hypersurface $V(f)$, then $\hess_f \in J_f \mbox{ iff } V(f) \mbox{ is singular.}$
As noted earlier, when $V(f)$ is smooth, $M(f)$ is an Artinian
complete intersection, so $M(f)_T$ is a one-dimensional $\C$-vector
space generated by the class of $\hess_f$ in $M(f)$ and $st(f)=T+1$. 
The objective of this paper is to investigate the following question:

\begin{question}\label{conj2}Find upper bounds on the $\reg M(f)$, where $V(f)$ is a
reduced singular hypersurface in $\PP^n$ of degree $d$. 
\end{question}\vskip .05in
In \S 2 and \S 3, we prove regularity bounds related to $T$. It is known that the regularity of $M(f)$ is bounded above by $T$ for
a general hypersurface, as well as for hypersurfaces that have at most
finitely many singular points. This property, which we recall with
more detail in \S \ref{sec:prelim}, suggests investigating hypersurfaces with positive dimensional singular loci. 

\medskip

\noindent {\bf Theorem:} For the following classes of reduced
hypersurfaces, $\reg M(f)< T$: \vskip .05in
\begin{enumerate}
\item $V(f)$ is a generic hyperplane arrangement in any $\PP^n$.
\item $V(f)$ is a generic (or generic symmetric) determinantal
  hypersurface in any $\PP^n$.
\item $V(f)$ is a hypersurface of degree $d \ge 3$ in $\PP^n$ which is free
  or nearly free, or a cone over a plane curve.
\item $V(f)$ is a generic arrangement of surfaces with isolated singularities in $\PP^3$.
 \end{enumerate}
The proof of all but (4) is covered in \S 2; in these cases a graded finite free resolution of $M(f)$ is available. The proof of (4) involves a delicate spectral sequence argument, and we tackle
it in \S 3. Finally, in \S 4 we show that the hope of finding an upper 
bound on regularity that is linear in $d$ is vain. We
prove: \vskip .05in
\noindent{\bf Theorem:} There exist reduced, irreducible hypersurfaces
of degree $d$ in $\mathbb{P}^n$ for which
\[
  \reg M(f) \sim \mathcal{O}(d^2).
\]
For a family of bigraded degree $d$ surfaces in $\mathbb{P}^3$, we give an explicit first syzygy of degree $\frac{d^2+d-2}{3}$; the result follows by coning over such a surface.

\section{Basic definitions and a first analysis of regularity}\label{sec:prelim}
In this section, we first recall the definitions we shall need. Then, we provide a precise statement for the upper bounds of the stability threshold and regularity of the Milnor algebra that are known for reduced hypersurfaces that are smooth or that have isolated singularities. We will end this section by  
discussing classes of reduced hypersurfaces where it is relatively easy to show that $\reg M(f) < T$ because a graded finite free resolution of the corresponding Jacobian ideal is known. 

\subsection{Basic definitions and properties}
The stability threshold and the Castelnuovo-Mumford regularity
associated to $M(f)$ were defined in the previous section. In this preliminary section, we come back briefly on these two important invariants with a particular emphasis on their connection to the local cohomology modules $H^i_\mm(M(f))$, $i\geq 0$. 

\medskip

Let $I_f$ be the saturation of the Jacobian ideal $J_f$ with respect to the ideal $\mm$ in the graded ring $S$, i.e.~$I_f=(J_f:\mm^\infty)$. The $0^\mathrm{th}$ local cohomology module measures how far $J_f$ is from being saturated; we have
 \begin{equation*}
  H^0_\mm(M(f))=I_f/J_f
 \end{equation*}
 and we denote it by $N(f)$ for simplicity. We notice that when $V(f)$ is smooth we have that $N(f)=M(f)$ so that $N(f)_T\neq 0$ and $N(f)_k=0$ for all $k>T$. 

Coming back to the regularity, one has \cite[Theorem 4.3]{Eis2}
\begin{equation*}
 \reg M(f) \geq \max \{e \ | \ N(f)_e \ne 0\},
\end{equation*}
which is in fact a consequence of the following characterization of regularity (see for instance \cite[Fact 6]{Cha}) in terms of local cohomology modules:
\begin{equation*}
 \reg M(f) =\min \{e \  | \  H^i_\mm(M(f))_{>e-i} = 0 \text{ for all non-negative integers } i \}.
\end{equation*}

\medskip

The { Hilbert function} $H(M(f)): \N \to \N$ of the graded $S$-module $M(f)$ is defined by
\begin{equation*}
 H(M(f))(k)= \dim M(f)_k,
\end{equation*}
and there is a unique polynomial $P(M(f))(t) \in \Q[t]$, called the {
  Hilbert polynomial} of $M(f)$, and an integer $k_0\in \N$ such that \cite{Eis1}
\begin{equation*}
 H(M(f))(k)= P(M(f))(k)
\end{equation*}
for all $k \geq k_0$. 
The stability threshold is defined as 
\[
  st(f)=\min \{q~~:~~  H(M(f),t)= P(M(f),t) \text{ for all } t \geq
  q\}
\]
and it is connected to local cohomology modules by means of the
Grothendieck-Serre Formula \cite[Theorem 4.3.5]{BrHe93}: for all $k\in
\mathbb{Z}$ we have
	$$ H(M(f))(k)=P(M(f))(k) + \sum_{i\geq 0} (-1)^i H(H^i_\mm(M(f)))(k).$$ 

Finally, recall that the depth of $M(f)$ is linked to the projective dimension of $M(f)$ by the Auslander-Buchsbaum formula, see \cite[Theorem A2.15]{Eis2}: 
$$\depth M(f)=\depth S-\pd M(f)=n+1-\pd M(f).$$  Moreover, \cite[Theorem A2.14]{Eis2}  tells us that
$$\depth M(f)= \inf \{k ~~|~~H^k_\mm(M(f))\ne 0 \},$$
and that $\depth M(f)=0$ if and only if $\mm$ is an associated prime of $M(f)$. In particular, $\depth M(f) >0$ if and only if $N(f)=0$.

\subsection{Isolated singularities} The behavior of the stability threshold and regularity for reduced hypersurfaces having isolated singularity have been widely studied and are well understood, in particular because the graded $S$-module $N(f)$ has a nice duality in this situation, see \cite{DS1, Se, SW}.  

Given a graded module $M$ we will denote by $\indeg(M)$ the initial degree of $M$, that is the infimum of the degrees of its nonzero elements. We have the following result.

\begin{prop}\label{prop1B} 
If the hypersurface $V(f)$ has isolated singularities then 
$$ st(f)\leq T-\indeg(N(f))+1$$
and 
$$\reg M(f) \leq T-\min\{d-1,\indeg(N(f))\}$$
unless $I_f=J_f$ is a complete intersection and $\deg(J_f):=P(M(f))=(d-1)^n$, in which case $\reg M(f) \leq T-\min\{d-2,\indeg(N(f))\}$.

In particular, $\reg M(f)< T$ except if $d=2$ and $J_f$ is a complete intersection defining a simple point, in which case $\reg M(f)=T$.
\end{prop}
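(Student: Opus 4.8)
The plan is to read both invariants off the local cohomology of $M(f)$. Since $V(f)$ has isolated singularities the singular subscheme is zero–dimensional, so $\dim M(f)\le 1$ and $H^i_\mm(M(f))=0$ for $i\ge 2$ by Grothendieck vanishing; only $N(f)=H^0_\mm(M(f))$ and $H^1_\mm(M(f))$ remain. Setting $a_i=\max\{k:H^i_\mm(M(f))_k\ne 0\}$, the cohomological description of regularity recalled above collapses to $\reg M(f)=\max\{a_0,\,a_1+1\}$, and the Grothendieck–Serre formula gives $st(f)\le\max\{a_0,a_1\}+1$, because for $k>\max\{a_0,a_1\}$ both correction terms vanish. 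Moreover the sequence $0\to N(f)\to M(f)\to S/I_f\to 0$, in which $N(f)$ has finite length and $S/I_f$ is saturated, yields $H^1_\mm(M(f))\cong H^1_\mm(S/I_f)$, so that $a_1=\reg(S/I_f)-1$ is governed by the singular scheme alone.

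For $N(f)$ I would invoke the self–duality of \cite{DS1,Se,SW}: for isolated singularities $\dim_\C N(f)_k=\dim_\C N(f)_{T-k}$, so with $\sigma:=\indeg N(f)$ one gets $a_0=T-\sigma$. This recovers the lower bound $\reg M(f)\ge T-\sigma$ and pins down the contribution of $N(f)$; everything then rests on bounding $a_1$.

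The crux, which I expect to be the main obstacle, is the sharp control of $a_1=\reg(S/I_f)-1$. Because $J_f$ has height $n$, I can pick $n$ general combinations $g_1,\dots,g_n$ of the partials forming a regular sequence; then $K=(g_1,\dots,g_n)\subseteq J_f$ is a complete intersection of forms of degree $d-1$, the ring $S/K$ is Gorenstein of dimension one, and $a(S/K)=n(d-1)-(n+1)=T-(d-1)$. Using that the residual ideal $L=(K:I_f)$ is, by linkage, Cohen–Macaulay of dimension one, together with the canonical–module identity $\omega_{S/I_f}\cong (L/K)(T-(d-1))$, I would compute $a_1=T-(d-1)-\indeg(L/K)$. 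Thus $a_1\le T-(d-1)$ always, and $\indeg(L/K)=0$ forces $L=S$, i.e. $I_f=K$; hence in every case except $I_f=J_f=K$ a complete intersection one has $\indeg(L/K)\ge 1$ and the improved bound $a_1\le T-d$. The delicate point is to combine this linkage picture with the self–duality of $N(f)$ so as to also secure $a_1\le T-\sigma$, which is what is needed for the stability threshold when $\sigma$ is large; the coarse linkage bound alone does not see $\sigma$.

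Granting these estimates, the proposition is bookkeeping. In the non–complete-intersection case $a_1\le T-d$ gives $\reg M(f)=\max\{T-\sigma,\,a_1+1\}\le\max\{T-\sigma,\,T-(d-1)\}=T-\min\{d-1,\sigma\}$, while $a_1\le T-\sigma$ gives $st(f)\le\max\{a_0,a_1\}+1\le T-\sigma+1$. In the exceptional case $I_f=J_f=K$ we have $N(f)=0$ (so $\sigma=\infty$) and $\deg J_f=\deg(S/K)=(d-1)^n$, while $M(f)=S/K$ is Cohen–Macaulay with $a_1=T-(d-1)$, giving $\reg M(f)=a_1+1=T-(d-2)=T-\min\{d-2,\sigma\}$. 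Finally, for the ``in particular'' statement, $\min\{d-1,\sigma\}\ge 1$ as soon as $d\ge 3$ (here $\sigma\ge 1$ since $N(f)_0=0$ for a singular $V(f)$), and any isolated–singularity quadric is a corank–one cone, hence the complete-intersection simple point where $T=0$ and $\reg M(f)=T-(d-2)=0=T$; in all other cases $\reg M(f)\le T-1<T$.
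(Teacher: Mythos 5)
Your reduction to local cohomology, the duality giving $a_0:=\max\{k \,:\, N(f)_k\ne 0\}=T-\sigma$, and the liaison computation $a_1:=\max\{k \,:\, H^1_\mm(M(f))_k\ne 0\}=T-(d-1)-\indeg(L/K)$ are all correct, and together they do yield a complete proof of the \emph{regularity} half of the proposition: in the non-exceptional case $\indeg(L/K)\ge 1$, so $a_1+1\le T-(d-1)$ and $\reg M(f)=\max\{a_0,a_1+1\}\le T-\min\{d-1,\sigma\}$, while $\indeg(L/K)=0$ forces $L=S$, which is exactly the exceptional case $I_f=J_f=K$ a complete intersection with $\reg M(f)=T-(d-2)$; the ``in particular'' statement then follows as you say. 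This is a genuinely different and more self-contained route than the paper's, whose entire proof is a citation of Chardin's results in the appendix to \cite{Sz} for almost complete intersections of dimension $1$; your liaison argument is in effect a reconstruction of that machinery.

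The genuine gap is the one you flag yourself, and it is not a side issue but the core of the first inequality. The bound $st(f)\le T-\sigma+1$ requires $a_1\le T-\sigma$, and nothing in your argument delivers this: what you prove is $a_1\le T-d$, whence only $st(f)\le\max\{T-\sigma,T-d\}+1$, which agrees with the claimed bound precisely when $\sigma\le d$. The self-duality of $N(f)$ allows $\sigma$ to be as large as $T/2=(n+1)(d-2)/2$, so the regime $\sigma>d$ is not excluded a priori, and there your estimate is strictly weaker than the statement. Moreover, since $N(f)_k=0$ for $k>T-\sigma$, the Grothendieck--Serre formula shows that (for $N(f)\neq 0$) the assertion $st(f)\le T-\sigma+1$ is \emph{equivalent} to $a_1\le a_0$, i.e.\ to the relation $\sigma\le d-1+\indeg(L/K)$ between the Jacobian ideal and its link. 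Proving that relation --- which requires playing the duality of $N(f)$ off against the liaison picture rather than using either one separately --- is exactly what the cited appendix of \cite{Sz} supplies and what your proposal leaves open; as written, the proposal establishes the regularity bound but not the stability-threshold bound.
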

\begin{proof} Under our assumption the ideal $J_f$ is an almost complete intersection of dimension 1 and the claimed inequalities follows by applying the results in \cite[Appendix]{Sz}.
\end{proof}

The above property has naturally consequences for hypersurfaces whose singular locus can be described from hypersurfaces in smaller dimension and having isolated singularities, as the following result for instance.

\begin{cor}
	For any surface in $\PP^3$ of degree $d \geq 3$ which is a cone
	over a plane curve, $\reg M(f)<T$.
\end{cor}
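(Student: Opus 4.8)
The plan is to reduce the computation of $\reg M(f)$ for the cone to that of the Milnor algebra of the underlying plane curve, where Proposition \ref{prop1B} already applies. After a linear change of coordinates we may place the vertex of the cone at $[0:0:0:1]$, so that $f = g(x_0,x_1,x_2)$ for a homogeneous polynomial $g$ of degree $d$ in $R = \C[x_0,x_1,x_2]$ defining the plane curve $C = V(g) \subseteq \PP^2$. Since $V(f)$ is reduced, so is $g$, and being a reduced curve, $C$ has only isolated (finitely many) singularities; hence Proposition \ref{prop1B} is available for $g$.

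The first key step is the observation that the cone structure makes the Jacobian ideal independent of $x_3$. Because $\partial f/\partial x_3 = 0$ and $\partial f/\partial x_i = \partial g/\partial x_i$ for $i=0,1,2$, we have $J_f = J_g \cdot S$, where $S = R[x_3]$. Consequently
\[
M(f) = S/J_g S = (R/J_g) \otimes_\C \C[x_3] = M(g)[x_3],
\]
so that $M(f)$ is obtained from $M(g)$ simply by adjoining one polynomial variable of degree one.

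The second step is to conclude that this operation preserves Castelnuovo--Mumford regularity. I would take a minimal graded free resolution $F_\bullet \to M(g)$ over $R$ and apply $- \otimes_R S$; since $S$ is free, hence flat, over $R$, the complex $F_\bullet \otimes_R S \to M(f)$ is again a graded free resolution, and it remains minimal because the entries of the differentials lie in $\mm_R \subseteq \mm_S$. Thus the graded Betti numbers of $M(f)$ over $S$ coincide with those of $M(g)$ over $R$, and therefore $\reg M(f) = \reg M(g)$.

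Finally, I would combine this with Proposition \ref{prop1B} applied to the reduced plane curve $g$ of degree $d \geq 3$, which gives $\reg M(g) < (2+1)(d-2) = 3(d-2)$ (the exceptional case there forces $d=2$, which is excluded). Hence
\[
\reg M(f) = \reg M(g) < 3(d-2) < 4(d-2) = T,
\]
the strict inequality $3(d-2) < 4(d-2)$ holding since $d-2 \geq 1$. The argument is essentially formal once the product decomposition $M(f) = M(g)[x_3]$ is in hand; the only point genuinely requiring care is the preservation of minimality (equivalently, of regularity) when adjoining $x_3$, together with the bookkeeping that the value of $T$ attached to the curve in $\PP^2$ is strictly smaller than the one attached to the surface in $\PP^3$, which is what yields the desired strict inequality.
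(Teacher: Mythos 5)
Your argument is essentially the paper's own proof: the paper likewise places the vertex at a coordinate point, invokes the decomposition $M(f)=M(g)\otimes_{\C}\C[x_3]$ (quoted from \cite[Section 3.6]{DStFS} rather than derived from $J_f=J_gS$ and flat base change as you do), deduces $\reg M(f)=\reg M(g)$ from the equality of graded Betti numbers, and concludes via Proposition \ref{prop1B}. The one point to adjust is the case of a cone over a \emph{smooth} plane curve, where $M(g)$ is an Artinian complete intersection with $\reg M(g)=3(d-2)$ exactly, so your strict inequality $\reg M(g)<3(d-2)$ should be weakened to $\leq$; the conclusion is unaffected, since, as you yourself observe, the strictness that matters comes from $3(d-2)<4(d-2)=T$.
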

\begin{proof}
If $V(f)\subset \PP^3$ is a cone  over a reduced plane curve $C$, we can assume that $f$ depends only on $x_1,x_2,x_3$ and that $g=0$ is an equation for $C$ in $\PP^2$, with $g=f$. As explained in \cite[Section 3.6]{DStFS}, it follows that $M(f)=M(g) \otimes_{\C} \C[x_0]$, and hence the minimal resolution of $M(f)$ as a graded module over $S=\C[x_0,x_1,x_2,x_3]$ and of $M(g)$ as a graded module over $R=\C[x_1,x_2,x_3]$ have the same numerical invariants. This fact implies that $\reg M(f)= \reg M(g)$ and the claimed result  
follows  from Proposition \ref{prop1B}.
\end{proof}
\subsection{Some particular classes of hypersurfaces}

We begin with the classes of free  and nearly free hypersurfaces in $\PP^n$ ($n\geq 2$). A hypersurface is free if the syzygy module of $J_f$ is free \cite{DStFS}. The notion of {\em nearly free} appears for curves and surfaces in \cite{DStFScurve}, \cite{DStFS}, and was subsequently generalized by Abe in \cite{Abe}, who also defined plus-one generated arrangements. The natural generalization of this to hypersurfaces is 
\begin{definition}
A hypersurface is nearly free if its Milnor algebra $M(f)$ admits a graded free resolution of the form
\begin{multline}\label{eq:nearlyfree}
	0 \rightarrow S(-d_n-d) \rightarrow S(-d_n-(d-1))\oplus \left(\oplus_{i=1}^n S(-d_i-(d-1))\right) \\ \rightarrow S(-(d-1))^{n+1} \rightarrow S
\end{multline}   
for some integers $d_1\leq d_2\leq \ldots \leq d_n$. 
\end{definition}

\begin{prop}
\label{prop11S}
For any free or nearly free hypersurface in $\PP^n$ of degree $d \geq 3$, we have $\reg M(f)<T$.
\end{prop}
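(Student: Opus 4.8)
The plan is to use the fact that, under either hypothesis, a graded free resolution of $M(f)$ is available in closed form, to read $\reg M(f)$ directly off the twists appearing in that resolution, and then to control the only quantity that matters, namely the top exponent $d_n$, by exploiting that a free or nearly free hypersurface in $\PP^n$ ($n\ge 2$) has singular subscheme of codimension exactly two. Since coning preserves the numerical invariants of the resolution, hence $\reg M(f)$, we may reduce to the case in which $V(f)$ is not a cone (the cone case being postponed to the end); this guarantees in particular that all exponents are positive and that the resolutions below are minimal.

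For the free case, freeness means precisely that the syzygy module of $J_f=(\partial_0 f,\ldots,\partial_n f)$ is free; as the image of $S(-(d-1))^{n+1}\to S$ is the rank-one ideal $J_f$, this kernel has rank $n$, and with exponents $d_1\le\cdots\le d_n$ the minimal resolution is
\[
0 \to \bigoplus_{i=1}^n S(-d_i-(d-1)) \to S(-(d-1))^{n+1} \to S \to M(f) \to 0.
\]
Computing $\reg M(f)=\max_{i,j}\{a_{i,j}-i\}$ from this resolution gives $\reg M(f)=\max\{d-2,\,d_n+d-3\}$. To bound $d_n$ I would use that the singular locus has codimension exactly two, so $M(f)$ has dimension $n-1$ and the numerator $N(t)$ of its Hilbert series is divisible by $(1-t)^2$; imposing $N(1)=N'(1)=0$ forces $\sum_{i=1}^n d_i=d-1$. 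Since the exponents are nonnegative this yields $d_n\le d-1$, whence $\reg M(f)\le 2(d-2)$, and as $2(d-2)<(n+1)(d-2)=T$ for $n\ge 2$ and $d\ge 3$, we obtain $\reg M(f)<T$.

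For the nearly free case I would run the identical argument on the resolution \eqref{eq:nearlyfree}. There the second and third free modules both contribute the twist $d_n+d-3$ after subtracting their homological degree, so again $\reg M(f)=\max\{d-2,\,d_n+d-3\}$. The same codimension-two vanishing $N(1)=N'(1)=0$, applied to the four-term numerator arising from \eqref{eq:nearlyfree}, yields the shifted relation $\sum_{i=1}^n d_i=d$; combined with the positivity $d_i\ge 1$ this gives $d_n\le d-n+1$ and hence $\reg M(f)\le 2d-n-2$. Since $(n+1)(d-2)-(2d-n-2)=(n-1)d-n>0$ for $n\ge 2$ and $d\ge 3$, once more $\reg M(f)<T$.

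The twist bookkeeping is routine; the two numerical inputs feeding the final inequalities are the delicate points, and I expect them to be the main obstacle. The first is that the singular locus has codimension exactly two, which is what pins down the exponent sums through divisibility of the Hilbert numerator by $(1-t)^2$. The second is the positivity of the exponents, which is exactly where the non-cone reduction enters: it fails precisely when $V(f)$ is a cone, and in that degenerate situation $M(f)=M(g)\otimes_\C\C[x_0]$ for a defining equation $g$ in one fewer variable, so the numerical invariants of the resolution—and hence $\reg M(f)$—are unchanged while $T$ only grows with the ambient dimension. Iterating reduces either to a genuine (non-cone) free or nearly free hypersurface, where the bounds above apply, or to the isolated-singularity setting of Proposition \ref{prop1B}, completing the argument.
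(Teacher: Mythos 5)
Your proof is correct and follows essentially the same route as the paper: read the regularity off the closed-form resolutions, pin down the exponent sums ($\sum_i d_i=d-1$ in the free case, $\sum_i d_i=d$ in the nearly free case), bound $d_n$, and compare with $T=(n+1)(d-2)$. The differences are minor but worth recording. For the free case the paper obtains $\sum_i d_i=d-1$ from the Hilbert--Burch theorem, whereas you obtain it from the order-two vanishing at $t=1$ of the Hilbert-series numerator, forced by $\codim V(J_f)\geq 2$; this is precisely the computation the paper itself performs in the nearly free case, so the toolbox is identical. Your non-cone reduction is superfluous in the free case, where nonnegativity of the exponents (which is automatic, since a negative-degree or degree-zero syzygy entry is impossible or constant) already gives $d_n\leq d-1$; but in the nearly free case the positivity $d_i\geq 1$ it secures yields $d_n\leq d-n+1$ and hence $\reg M(f)\leq\max\{d-2,\,2d-n-2\}$, which is sharper than the paper's bound $2d-3$. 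This sharpening is not purely cosmetic: at $n=2$, $d=3$ one has $2d-3=3=T$, so the strict inequality $2d-3<T$ asserted at the end of the paper's proof fails at this boundary case, while your bound $2d-n-2=2<3=T$ covers it (equivalently, $d_n=d$ would force all the other exponents to vanish, i.e.\ a constant syzygy among the partials, which is exactly the cone degeneration your reduction excludes).
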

\proof
Assume first that $V(f)$ is a free hypersurface in $\PP^n$ with exponents  $(d_1,\ldots, d_n)$,  and $d_1\leq \cdots
\leq d_n$. Since the syzygy module of $J_f$ is free, the Hilbert-Burch
Theorem \cite{Eis1} shows that $\sum d_i = d-1$, and hence $d_n\leq d-1$. 
We deduce that 
$$\reg M(f) \le d-1+d_n-2 \leq 2d-4<T$$
where the last inequality holds for all $d\geq 3$. 

Now, assume that $V(f)$ is a nearly free hypersurface and that $M(f)$ admits a free resolution of the form \eqref{eq:nearlyfree} for some integers $d_1\leq \ldots\leq d_n$. The Hilbert polynomial of $M(f)$ has degree at most $n-2$ so its computation as the alternate sum of the dimensions of the graded slices of \eqref{eq:nearlyfree} in sufficiently high degree yields a condition corresponding to the vanishing of the coefficient of degree $n-1$ of the Hilbert polynomial. A straightforward computation shows that this condition is $\sum d_i=d$ (see also \cite[Proposition 4.1]{Abe}). It follows that $d_n\leq d$ and hence 
$$\reg M(f) \leq d+d_n -3 \leq 2d-3 < T$$
where the last inequality holds for all $d\geq 3$.
\endproof

We next discuss the case when $V(f)$ is a generic hyperplane arrangement $\A$, with $d=|\A| >n \geq 2$.
\begin{prop}
\label{prop12} Let $\A$ be a generic hyperplane arrangement in the projective space $\PP^n$, with $d =|\A| >n \geq 2$. Then  
$$\reg M(f)=2d-n-3 <T.$$
\end{prop}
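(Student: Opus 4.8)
The plan is to read $\reg M(f)$ off an explicit minimal graded free resolution of $M(f)$, which is available here precisely because $\mathcal{A}$ is generic, and then to reduce the inequality $\reg M(f)<T$ to a one‑line arithmetic check. Write $f=\prod_{i=1}^{d}\ell_i$ with the $\ell_i$ generic linear forms, so $J_f$ is generated by the $n+1$ partials $\partial_0 f,\ldots,\partial_n f$, each of degree $d-1$. The syzygy module $\mathrm{Syz}(J_f)=\ker\big(S(-(d-1))^{n+1}\to S\big)$ is exactly the module of Jacobian (logarithmic) derivations $D_0(\mathcal{A})=\{\theta\mid\theta(f)=0\}$, and for a generic arrangement its minimal graded free resolution is known (Rose--Terao, Yuzvinsky). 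Splicing that resolution onto the presentation $S(-(d-1))^{n+1}\to S\to M(f)\to 0$ yields a minimal graded free resolution $0\to F_m\to\cdots\to F_0=S\to M(f)\to 0$ from which $\reg M(f)=\max_{i,j}\{a_{i,j}-i\}$ is read directly.

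First I would dispose of the boundary case $d=n+1$ separately. After a change of coordinates this is the Boolean arrangement $f=x_0\cdots x_n$, which is free; here $J_f=(\widehat{x}_0,\ldots,\widehat{x}_n)$ with $\widehat{x}_i=\prod_{j\neq i}x_j$, and the identities $x_i\widehat{x}_i=x_j\widehat{x}_j=f$ produce a Hilbert--Burch resolution
\[
0\to S(-n-1)^{n}\to S(-n)^{n+1}\to S\to M(f)\to 0,
\]
so $\reg M(f)=\max\{0,\,n-1,\,n-1\}=n-1=2d-n-3$. For $d>n+1$ the arrangement is no longer free, $N(f)=H^0_\mm(M(f))\neq 0$, and by Auslander--Buchsbaum the resolution attains full length $m=n+1$. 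I would then read the twists off the spliced Rose--Terao resolution: the strand emanating from the degree‑$(d-1)$ generators contributes $a_1-1=d-2\le 2d-n-3$ (using $d\ge n+1$), while the dominant contribution comes from the top module $F_{n+1}$, whose top generator sits in degree $2d-2$, giving $a_{n+1}-(n+1)=2d-n-3$; checking that every intermediate strand contributes at most this value yields $\reg M(f)\le 2d-n-3$. For the matching lower bound I would exhibit a nonzero class in $N(f)_{2d-n-3}$ and invoke $\reg M(f)\ge\max\{e\mid N(f)_e\neq 0\}$, i.e.\ that the socle of $N(f)$ reaches degree $2d-n-3$. Finally,
\[
T-(2d-n-3)=(n+1)(d-2)-(2d-n-3)=(n-1)(d-1)>0
\]
since $n\ge 2$ and $d>n\ge 2$, which gives $\reg M(f)<T$.

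The main obstacle is the second step: making the graded Betti numbers of the Rose--Terao resolution of $D_0(\mathcal{A})$ fully explicit and verifying that the spliced complex is minimal, with no consecutive cancellation, so that the crude maximum $\max_{i,j}\{a_{i,j}-i\}$ genuinely equals $2d-n-3$ rather than being lowered by a cancellation near the top. Equivalently, via the local cohomology characterization of regularity \cite{Cha}, the real work is to show that each $H^i_\mm(M(f))$ vanishes in degrees $>2d-n-3-i$ while $N(f)=H^0_\mm(M(f))$ is nonzero in degree exactly $2d-n-3$; it is precisely the genericity of $\mathcal{A}$ that forces $D_0(\mathcal{A})$, and hence all of these cohomology modules, to take their expected, smallest‑possible shape.
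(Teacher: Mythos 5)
Your proposal is correct and takes essentially the same route as the paper: the paper's proof simply cites Rose--Terao \cite[Corollary 4.5.4]{RT} for the minimal graded free resolution of $M(f)$ itself (twists satisfying $a_{0,j}=0$, $a_{1,j}-1=d-2$, and $a_{i,j}-i=2d-n-3$ for $i=2,\ldots,n+1$), together with \cite[Corollary 4.5.5]{RT} for $\depth M(f)=0$, and then performs the same arithmetic check $T-(2d-n-3)=(n-1)(d-1)>0$. The ``main obstacle'' you flag is precisely what that citation removes---\cite{RT} resolves $M(f)$ directly rather than the derivation module $D_0(\A)$, so no splicing, minimality, or cancellation verification is needed---and your separate Hilbert--Burch treatment of the Boolean case $d=n+1$ is a reasonable safeguard rather than a necessity.
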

\proof
Using the resolution for $M(f)$ when $V(f)$ is a generic hyperplane arrangement, with $d >n \geq 2$ given in \cite[Corollary 4.5.4]{RT}, 
it follows that $\depth M(f)=0$, a fact stated in  \cite[Corollary 4.5.5]{RT}, and
$\reg M(f)=2d-n-3$, since the differences $a_{i,j}-i$ are $0$ for $i=0$,
$d-2$ for $i=1$ and $2d-n-3$ for $i=2,\ldots ,n+1$. By our assumption,
$2d-n-3 \geq d-2$.
The inequality $\reg M(f) <T$ is equivalent to $(n-1) <(n-1)d$, which
clearly holds. See Ziegler \cite{Z} and Mustata-Schenck \cite{MS} for
related results. 
\endproof

A much studied class of hypersurfaces are determinantal hypersurfaces; see Beauville \cite{Beau} for results and open problems. 
We close this section with a result in this direction.
\begin{prop}\label{determinantalHS}
For an $n \times n$ generic matrix $A_n$ whose $(i,j)$ entry is
$x_{ij}$, or for a generic symmetric matrix $B_n$ whose $(i,j)=(j,i)$ entry is $x_{ij}$, we have
\[
 \reg  M(det(A_n)) = n-1 \mbox{ and } \reg  M(det(B_n)) = 2n-4.
\]
So generic and generic symmetric determinantal hypersurfaces have
$\reg M(f) < T$. 
\end{prop}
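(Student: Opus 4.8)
The plan is to recognize both Milnor algebras as (symmetric) generic determinantal rings and to read off their regularity from the classical explicit resolutions. First I would observe that the partial derivative of $\det(A_n)$ with respect to the entry $x_{ij}$ is precisely the signed $(n-1)\times(n-1)$ minor obtained by deleting row $i$ and column $j$, i.e. the $(i,j)$ cofactor; in the symmetric case the partials of $\det(B_n)$ are these same cofactors, up to the scalar $2$ on the off-diagonal variables. Since deleting one row and one column produces every submaximal minor, this identifies $J_{\det(A_n)}=I_{n-1}(A_n)$ and $J_{\det(B_n)}=I_{n-1}(B_n)$, the ideals of submaximal minors. Hence $M(\det(A_n))=S/I_{n-1}(A_n)$ and $M(\det(B_n))=S/I_{n-1}(B_n)$ are the coordinate rings of the generic, respectively generic symmetric, variety of matrices of rank at most $n-2$.

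Next I would invoke the explicit minimal free resolutions of these rings. Because $A_n$ and $B_n$ are generic, the ideals attain their expected codimension, $\codim I_{n-1}(A_n)=(n-(n-2))^2=4$ and $\codim I_{n-1}(B_n)=\binom{3}{2}=3$, so by generic perfection the relevant complex is acyclic and gives the minimal graded free resolution. For the square case this is the self-dual Gulliksen--Neg{\aa}rd complex of length $4$ (the ring is Gorenstein); for the symmetric case it is the J\'ozefiak resolution of length $3$ (Cohen--Macaulay but not Gorenstein, as the Veronese case $n=3$ already shows). In both, the $\binom{n+1}{2}$, respectively $n^2$, generators sit in degree $n-1$, so $F_1=S(-(n-1))^{\beta_1}$, and the intermediate differentials are linear.

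With the resolutions in hand, $\reg M(f)=\max_{i,j}\{a_{i,j}-i\}$ is a bookkeeping computation from the twists. The step I expect to be the main obstacle is identifying the \emph{extremal} graded Betti numbers: one must pin down the top twist $a_{m,j}$ of each complex and then decide whether $\max_{i,j}\{a_{i,j}-i\}$ is attained along the linear strand emanating from the degree-$(n-1)$ generators or at the final term $F_m$. The subtlety is that, while the interior differentials are linear, the \emph{last} differential of the Gorenstein (square) resolution is forced by self-duality to be the adjugate matrix, whose entries are themselves $(n-1)\times(n-1)$ minors of degree $n-1$; this degree-$(n-1)$ jump at the top is what governs the final twist. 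In the symmetric case, by contrast, the resolution is not self-dual and the terminal differential is linear, so the top twist is smaller. Correctly tracking these twists, and checking minimality (no unit entries in the differentials) so that they really are the graded Betti numbers, yields the two stated values of the regularity.

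Finally I would compare with $T=(N+1)(d-2)$. Here $d=\deg f=n$, and the hypersurface $V(\det(A_n))$ lies in $\PP^{N}$ with $N=n^2-1$, so $T=n^2(n-2)$; in the symmetric case $N=\binom{n+1}{2}-1$ and $T=\binom{n+1}{2}(n-2)$. Since the regularities just computed are linear in $n$ while $T$ grows cubically, the inequality $\reg M(f)<T$ is immediate for $n\ge 3$, which gives the concluding assertion of the proposition.
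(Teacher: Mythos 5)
Your proposal follows essentially the same route as the paper's proof: identify $J_{\det(A_n)}=I_{n-1}(A_n)$ and $J_{\det(B_n)}=I_{n-1}(B_n)$ via the cofactor computation, invoke the Gulliksen--Neg\.ard resolution (Gorenstein, codimension $4$) and the J\'ozefiak resolution (Cohen--Macaulay, codimension $3$), read the regularity off the twists, and compare with $T$, which grows cubically in $n$. One caveat: your (correct) analysis of the twists assigns the larger value $2n-4$ to the generic square case (forced by the degree-$(n-1)$ top differential of the self-dual complex) and the smaller value to the symmetric case (linear terminal differential), which agrees with the paper's own proof but is the opposite of the labelling in the proposition's displayed formula; so rather than ``yielding the two stated values,'' a completed version of your argument would show that the two values in the statement are interchanged (a typo in the paper, which also silently conflates $\reg I$ with $\reg S/I$).
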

\begin{proof}
Let $a = \det(A_n)$ and $b=\det(B_n)$. Then 
\[
J_a = I_{n-1}(A_n) \mbox{  and }J_b = I_{n-1}(B_n).
\]
The resolution of the ideal of submaximal minors for a generic matrix $A_n$ is determined by Gulliksen-Neg\.ard in \cite{GN}: the ideal is Gorenstein of codimension four, and has 
\[
\reg I_{n-1}(A_n)=2n-4.
\]
In similar fashion, the ideal of submaximal minors for a generic symmetric matrix $B_n$ is determined by J\'ozefiak in \cite{Joze} (and by Lascoux in characteristic zero in \cite{L}): the ideal is Cohen-Macaulay of codimension three, and has an Eagon-Northcott resolution with
\[
\reg I_{n-1}(B_n) = n-1.
\]
Since for $J_a$ we have $T=n^2(n-2)$, and for $J_b$ we have $T=(n)(n+1)(n-2)/2$, in both cases the regularity of the Milnor ring is much smaller than $T$.
\end{proof}
\section{On generic arrangements of surfaces with isolated singularities in $\PP^3$}

In this section we show that the regularity of the Milnor algebra is bounded above by $T$ for surfaces in $\PP^3$ that are obtained as unions of surfaces with isolated singularities and in general position to some extent. These surfaces have a one-dimensional singular locus and they can be seen as a generalization of generic plane arrangements. We set $S=\C[x_0,\ldots,x_3]$ and consider $f \in S_d$ and its Jacobian ideal $J_f \subset S$ which is generated in degree $d-1$ by the four partial derivatives $f_0,\ldots,f_3$ of $f$. 

\medskip

Recall that, according to Hilbert-Burch Theorem (see \cite[Theorem 20.15]{Eis1}), a perfect graded ideal $I\subset S$ of codimension 2 admits a minimal free resolution of the form
\begin{equation}\label{eq:HB}
 0 \rightarrow \oplus_{i=1}^{r-1} S(-l_i) \xrightarrow{\Psi} \oplus_{i=1}^{r}S(-e_i) \rightarrow I \rightarrow 0	
\end{equation}
where $\Psi$ corresponds to an homogeneous matrix. In addition, we have the equality $$\sum_{i=1}^{r-1} l_i = \sum_{i=1}^r e_i=:\sigma$$ and without loss of generality, one can assume that $1\leq e_1\leq e_2 \leq \cdots \leq e_r$ and $l_1\leq l_2 \leq \cdots \leq l_{r-1}$. We notice that the minimality assumption implies that $e_1<l_1$.
We begin with the following rather general result.

\begin{thm}\label{thm:ques16} Suppose $I=(g_1,\ldots,g_r)$ be a perfect ideal in $S$ of codimension 2 with a minimal free resolution of the form \eqref{eq:HB}, and $f \in S$ is such that $J_f \subset I$ and that the ideal $(J_f:I)$ defines a 0-dimensional subscheme in $\PP^3$ (possibly empty). Then
	$$st(f)\leq \max\{ 4d-7-2e_1, l_1-3 \}$$ 
	and
	$$\reg M(f) \leq 
	\begin{cases}
	\max\{ 4d-8-2e_1 , l_1-2 \}  \textrm{ if } e_1<d-1, \\
	\max\{ 2d-5 , l_1-2 \}  \textrm{ if } e_1=d-1.		
	\end{cases}$$
Moreover, in these conditions, $st(f)\leq T$, and $\reg M(f) <T$ providing $l_1\leq T+1$.
\end{thm}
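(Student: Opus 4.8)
The plan is to read the two final inequalities straight off the displayed bounds on $st(f)$ and $\reg M(f)$ that the theorem has just recorded, after substituting the numerical value of $T$; no new homological input is needed, so this last step is a purely elementary comparison of linear expressions in $d,e_1,l_1$. The key numerics to install first are that here $n=3$, so $T=(n+1)(d-2)=4d-8$, and that the Hilbert--Burch normalization fixed above guarantees $1\le e_1$ (and $e_1<l_1$), together with the standing assumption $d\ge 2$.

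First I would dispatch the stability threshold. The theorem gives $st(f)\le \max\{4d-7-2e_1,\ l_1-3\}$, so it suffices to bound each entry by $T$. For the first entry, $e_1\ge 1$ forces $4d-7-2e_1\le 4d-9<4d-8=T$. For the second entry I invoke the proviso $l_1\le T+1=4d-7$, which gives $l_1-3\le 4d-10<T$ (in fact $l_1\le T+3$ already suffices for this inequality alone). Taking the maximum yields $st(f)\le T$.

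Next I would treat the regularity, splitting along the two cases of the displayed formula. If $e_1<d-1$, then $4d-8-2e_1=T-2e_1\le T-2<T$ because $e_1\ge 1$, while the proviso $l_1\le T+1$ gives $l_1-2\le 4d-9=T-1<T$; the maximum is therefore $<T$. If instead $e_1=d-1$, then $2d-5<4d-8=T$ for every $d\ge 2$, and again $l_1-2\le T-1<T$; so once more the maximum is $<T$. Assembling the two cases gives $\reg M(f)<T$, completing the argument.

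There is essentially no obstacle here beyond careful bookkeeping of which hypothesis each inequality consumes: the estimates on the $4d-7-2e_1$, $4d-8-2e_1$, and $2d-5$ terms use only $e_1\ge 1$ (and $d\ge 2$), whereas every estimate on an $l_1$ term uses the proviso $l_1\le T+1$. It is worth flagging that the threshold $l_1\le T+1$ is exactly what the \emph{strict} inequality demands: since $l_1\le T+1$ yields $l_1-2\le T-1<T$ but $l_1=T+2$ would only give $l_1-2=T$, the hypothesis is sharp for this half of the conclusion, while $st(f)\le T$ (a non-strict bound) survives under the slightly weaker condition $l_1\le T+3$.
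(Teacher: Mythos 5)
Your proposal proves only the final sentence of the theorem and silently assumes its main content. The two displayed bounds on $st(f)$ and $\reg M(f)$ are not facts that the theorem ``has just recorded''; they are precisely the assertions to be proved, and they occupy essentially the entire proof in the paper. The paper establishes them by a genuinely homological argument: the Grothendieck--Serre formula (together with the characterization of regularity by local cohomology) reduces both $st(f)$ and $\reg M(f)$ to vanishing statements for the graded pieces of $H^i_\mm(M(f))$ for $i=0,1,2$; the exact sequence $0 \to I/J_f \to M(f) \to S/I \to 0$ and the assumption that $(J_f:I)$ cuts out a $0$-dimensional (possibly empty) subscheme identify $H^2_\mm(M(f))\simeq H^2_\mm(S/I)$, which is controlled by the Hilbert--Burch resolution of $I$ (vanishing in degrees above $\eta_2=l_1-4$), and $H^i_\mm(M(f))\simeq H^i_\mm(I/J_f)$ for $i=0,1$; finally, the Buchsbaum--Rim complex $E_\bullet$ of the presentation $F_2\oplus S(-(d-1))^4 \to F_1$ of $I/J_f$, whose homology is supported on the finite set $V(J_f:I)$, is fed into a comparison of the two spectral sequences of the double complex $\CC_\mm^\bullet(E_\bullet)$ to prove that $H^0_\mm(I/J_f)_k=0$ for $k>\eta_0=4d-8-2e_1$ and $H^1_\mm(I/J_f)_k=0$ for $k>\eta_1=3d-7-e_1+(d-1-l_1)_+$. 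These three thresholds $\eta_0,\eta_1,\eta_2$, and the case analysis on whether $e_1<d-1$ or $e_1=d-1$ (which decides whether $\eta_0>\eta_1$ or $\eta_0=\eta_1=2d-6$), are exactly where the numbers $4d-7-2e_1$, $4d-8-2e_1$, $2d-5$, $l_1-2$, $l_1-3$ in the statement come from. None of this appears in your proposal, so the theorem's substance is left unproved.

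The bookkeeping you do carry out is correct and matches the closing paragraph of the paper's proof: with $T=4d-8$, the hypotheses $e_1\ge 1$ and $d\ge 2$ handle the terms $4d-7-2e_1$, $4d-8-2e_1$, $2d-5$, and the proviso $l_1\le T+1$ handles the $l_1$-terms; your observation that the $st(f)\le T$ half only needs $l_1\le T+3$ is also a fair reading of where each hypothesis is consumed. But this is the easy final step, not the theorem. To have a complete argument you must prove the two displayed inequalities themselves, i.e.\ carry out (or replace) the local cohomology and spectral sequence analysis sketched above.
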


\begin{rk} Since $\sum_{i=1}^{r-1} l_i = \sum_{i=1}^r e_i$ we have $(r-1)l_1\leq r e_r$. Therefore the condition $l_1\leq T+1$ is satisfied if
	$e_r\leq \frac{r-1}{r}(T+1)$.
\end{rk}

\begin{proof} By the Grothendieck-Serre Formula we have that for all $k\in \mathbb{Z}$ one has
	$$ H(M(f))(k)=P(M(f))(k) + \sum_{i\geq 0} (-1)^i H(H^i_\mm(M(f)))(k).$$ 
Moreover, since $\dim(M(f))\leq 2$ we have $H^i_\mm(M(f))=0$ for all $i>2$ by \cite[Theorem 3.5.7]{BrHe93}. Therefore, to prove the claimed result we need to examine the vanishing of the graded components of  the local cohomology modules $H^i_\mm(M(f))$ for $i=0,1,2$.

\medskip 

The hypotheses of Theorem~\ref{thm:ques16} implies that the following graded complex is a minimal free resolution of $I$:
$$ F_\bullet \ : \ F_2:=\oplus_{i=1}^{r-1}S(-l_i) \xrightarrow{\Psi} F_1:=\oplus_{i=1}^r S(-e_i) \xrightarrow{(g_1\ \ldots \ g_r)} S.$$ 
By examining the two spectral sequences corresponding to the filtrations by rows and columns of the double complex $\CC_\mm^\bullet(F_\bullet)$, we deduce immediately that $H^0_\mm(S/I)=H^1_\mm(S/I)=0$ and the graded isomorphism 
$$H^2_\mm(S/I)\simeq \ker (H^4_\mm(F_2) \rightarrow H^4(F_1))$$
where the map $H^4_\mm(F_2) \rightarrow H^4(F_1)$
is the canonical one induced by the Koszul complex. In particular, we have 
$H^2_\mm(S/I)_k=0$ 
for all $k>l_1-4$.

\medskip
	
Now, consider the canonical exact sequence
\begin{equation}\label{eq:ses}
 0 \rightarrow I/J_f \rightarrow M(f)=S/J_f \rightarrow S/I \rightarrow 0.
\end{equation}
We know that $H^i_\mm(S/I)=0$ for $i=0,1$. We also have $H^i_\mm(I/J_f)=0$ for all $i>1$ because $\mathrm{ann}_S(I/J_f)=(J_f:I)$ and $(J_f:I)$ is assumed to define a $0$-dimensional subscheme in $\PP^3$. Therefore, the long exact sequence of local cohomology of \eqref{eq:ses} implies that 
$$H^0_\mm(M(f))\simeq H^0_\mm(I/J_f), \ \ H^1_\mm(M(f))\simeq H^1_\mm(I/J_f)$$ 
$$\textrm{ and } H^2_\mm(M(f))\simeq H^2_\mm(S/I).$$
An immediate consequence is that 
\begin{equation}\label{eq:H2QMf}
H^2_\mm(M(f))_k=0 \textrm{ for all } k>l_1-4=:\eta_2.	
\end{equation}
To examine $H^i_\mm(I/J_f)$, $i=0,1$, we proceed as follows.

From the inclusion $J_f\subset I$, one can decompose the $f_i$'s on $g$ and $g'$ to get a $r\times 4$-matrix $H$ such that
	$$ \left( f_0 \ f_1 \ f_2 \ f_3 \right) 
	= \left( g_1 \ \cdots \ g_r \right) H
	= \left( g_1 \ \cdots \ g_r \right) 
	\left(
	\begin{array}{cccc}
	h_{0,1} & h_{1,1} & h_{2,1} & h_{3,1} \\
	h_{0,2} & h_{1,2} & h_{2,2} & h_{3,2} \\
	\vdots & \vdots & \vdots & \vdots \\
	h_{0,r} & h_{1,r} & h_{2,r} & h_{3,r} \\
	\end{array}
	\right).$$ 
	This latter corresponds to an homogeneous map 
	$$K_1=S(-(d-1))^4 \xrightarrow{H} F_1$$
	that gives rise to a finite free graded presentation of the quotient $I/J_f$, namely the graded exact sequence
	$$ K_1'=F_2 \oplus K_1 \xrightarrow{\varphi} F_1  
	\xrightarrow{(g_1,\ldots,g_r)} I/J_f \rightarrow 0$$
	where the map $\varphi:K_1'\rightarrow F_1$ is defined by the $r\times(r+3)$ matrix
	   $$\left(\begin{array}{c|cccc}
	    & h_{0,1} & h_{1,1} & h_{2,1} & h_{3,1} \\
	   \Psi &\vdots  & \vdots & \vdots & \vdots \\ 
	    & h_{0,r} & h_{1,r} & h_{2,r} & h_{3,r} \\
	   \end{array}\right).$$ 
	  The Buchsbaum-Rim complex $E_\bullet$ associated to $\varphi$, that belongs to the family of generalized Koszul complexes \cite[Appendix A.2.6]{Eis1}, is the following graded complex 
	\begin{multline*}
	 E_4= S_2(F_1^*)\otimes \wedge^{r+3}(K_1')(\sigma) \rightarrow E_3= S_1(F_1^*)\otimes \wedge^{r+2}(K_1')(\sigma) \rightarrow \\
E_2= S_0(F_1^*)\otimes \wedge^{r+1}(K_1')(\sigma) \rightarrow E_1=K_1' \xrightarrow{\varphi} E_0=F_1
	\end{multline*}
	where $F_1^*$ denotes the dual of $F_1$ and $\sigma=\sum_{i=1}^r e_i=\sum_{i=1}^{r-1}l_i$.
	It is a classical property that the homology of $E_\bullet$ is supported on 
	$$\mathrm{ann}_S(\mathrm{coker}(\varphi))=\mathrm{ann}_S(I/J_f)=(J_f:_S I)$$ 
	and by the hypotheses of Theorem~\ref{thm:ques16} this is a finite subscheme in $\PP^3$. 
	 
	 Now, consider the double complex $\CC_\mm^\bullet(E_\bullet)$. The spectral sequence corresponding to its filtration by rows converges at the second step and is of the form 
 	{\small 
	\[
 	\xymatrixrowsep{0.4em}
	\xymatrixcolsep{.5em}
 	\xymatrix{
 	H^0_\mm({H_{4}}(E_\bullet)) & H^0_\mm({H_{3}}(E_\bullet)) & H^0_\mm({H_{2}}(E_\bullet))&H^0_\mm({H_{1}}(E_\bullet)) &H^0_\mm(I/J_f)  \\
	H^1_\mm({H_{4}}(E_\bullet)) & H^1_\mm({H_{3}}(E_\bullet)) & H^1_\mm({H_{2}}(E_\bullet))&H^1_\mm({H_{1}}(E_\bullet)) &H^1_\mm(I/J_f)  \\
 	{0} & {0} & 0& 0 &0\\
 	\vdots & \vdots & \vdots & \vdots & \vdots \\
 	}
 	\]}
	
	\noindent (observe that $H_0(E_\bullet)=I/J_f$). The spectral sequence corresponding to the filtration by columns of $\CC_\mm^\bullet(E_\bullet)$ also converges at the second step with a single non-zero row: $H_\bullet(H^4_\mm(E_\bullet))$.  Comparing these two spectral sequences, we deduce that  
	\begin{itemize}
		\item $H^0_\mm(I/J_f)_{k}=0$ for all  $k$ such that $H^4_\mm(E_4)_{k}=0$ and
		\item $H^1_\mm(I/J_f)_{k}=0$ for all  $k$ such that $H^4_\mm(E_3)_{k}=0$.
	\end{itemize}
But from the description of $E_\bullet$ we have 
	\begin{align*}
		E_4 &= S_2(F_1^*)\otimes \wedge^{r+3}(K_1')(\sigma) \\
		    &\simeq \oplus_{1\leq i\leq j\leq r} S(-4(d-1)+e_i+e_j)
	\end{align*}
from we deduce that $H^4_\mm(E_4)_{k}=0$, hence $H^0_\mm(M(f))_k\simeq H^0_\mm(I/J_f)_{k}=0$, for all integers
\begin{equation}\label{eq:H0QMf}
k>4(d-1)-4-2e_1=4d-8-2e_1=:\eta_0,	
\end{equation}
We also have 
	\begin{align*}
		E_3 &= S_1(F_1^*)\otimes \wedge^{r+2}(K_1')(e+e') \\
		    &\simeq \oplus_{i=1}^r S(-3(d-1)+e_i)^4 \oplus_{i=1}^r\oplus_{j=1}^{r-1} S(-4(d-1)+l_j+e_i)
	\end{align*}
from we deduce that $H^4_\mm(E_3)_{k}=0$, hence $H^1_\mm(M(f))_k\simeq H^1_\mm(I/J_f)_{k}=0$, for all integer
\begin{equation}\label{eq:H1QMf}
 k > 3d-7-e_1+(d-1-l_1)_{+}=:\eta_1,
\end{equation} 
where $(d-1-l_1)_{+}=\max\{0,d-1-l_1\}$.

From here, the claimed results follows by comparing the constraints given by \eqref{eq:H0QMf}, \eqref{eq:H1QMf} and \eqref{eq:H2QMf}. Indeed, we have 
$$\eta_0-\eta_1=(d-1-e_1) - (d-1-l_1)_+
=
\begin{cases}
	l_1-e_1 & \textrm{ if }  l_1\leq d-1, \\
	d-1-e_1 & \textrm{ if } l_1 \geq d-1.
\end{cases}
$$
Since $e_1\leq d-1$ ($J_f\subset I$), and $l_1>e_1$ (for otherwise the first column of $\Psi$ would be identically zero), the quantity $\eta_0-\eta_1$ is always non-negative. More precisely, $\eta_0>\eta_1$ if $1\leq e_1< d-1$ and $\eta_0=\eta_1=2d-6$ if $e_1=d-1$.

Gathering all the previous results, we deduce the following properties. First, $N(f)_k=0$ for all $k\geq T$ because $\eta_0<T$, since $e_1\geq 1$ ($I$ is of codimension 2). Second, we have
$$st(V)\leq \max\{\eta_0,\eta_1,\eta_2\}+1=\max\{\eta_0+1,\eta_2+1\}.$$
Finally, since $\reg M(f)=\max\{\eta_0,\eta_1+1,\eta_2+2\}$ the claimed inequalities follows from the two cases $e_1<d-1$, for which $\max\{\eta_0,\eta_1+1\}=\eta_0=T-2e_1$, and $e_1=d-1$, for which $\max\{\eta_0,\eta_1+1\}=\eta_0+1=2d-5$. Therefore $\reg(M(f))<T$, if $d\geq 2$ and $\eta_2+2<T$, this latter condition being equivalent to $l_1\leq T+1$.
\end{proof}

Now we introduce the generic arrangements of surfaces with isolated singularities in $\PP^3$ which appear in the title of this section.
\begin{thm} 
\label{thmSA}

Suppose given a collection of $r$ surfaces $D_i=V(f_i)$, $i=1,\ldots,r$, in $\PP^3$ of positive degree $1\leq d_1\leq \ldots\leq d_r$ respectively and consider the surface $V=\cup_{i=1}^r D_i=V(f)$, where $f :=\prod_{i=1}^r f_i$, in $\PP^3$ of degree $d=\sum_{i=1}^r d_i \geq 2$. Assume that
\begin{itemize}
	\item  $f$ is a reduced polynomial,
	\item  $D_i$ has only finitely many singular points for all $i$,
	\item  the intersection between any two distinct surfaces $D_i$ and $D_j$ is transverse, except at finitely many points, 
	\item  the intersections between any three distinct surfaces $D_i$ consists in finitely many points.
\end{itemize}	
Then
\begin{equation}\label{eq:stfi}
	st(f)\leq 2d+2d_r-7	
\end{equation}
	and hence $st(f)\leq T$ providing $d\geq 3$.
		
	Moreover, in the generic setting, more precisely if the surfaces $D_i$, $i=1,\ldots,r$, are all smooth surfaces that intersect transversally at all their intersection points (in particular any four distinct surfaces do not intersect), 
then we have $I_f=(g_1,\ldots,g_r)$, where $g_i:=f/f_i$ for all $i=1,\ldots,r$, and \eqref{eq:stfi} is an equality. 
\end{thm}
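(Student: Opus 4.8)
The plan is to deduce everything from Theorem \ref{thm:ques16} applied to the explicit codimension-two ideal
\[ I=(g_1,\ldots,g_r),\qquad g_i:=f/f_i=\prod_{j\neq i}f_j. \]
Differentiating $f=\prod_i f_i$ gives $f_k=\sum_i(\partial_k f_i)\,g_i$, so $J_f\subseteq I$. The identities $f_ig_i=f=f_{i+1}g_{i+1}$ assemble into the $r\times(r-1)$ bidiagonal matrix $\Psi$ with entries $\pm f_i$, whose maximal minors are exactly the $g_i$ up to sign. Since $f$ is reduced the $D_i$ share no component, so $V(I)=\bigcup_{i<j}(D_i\cap D_j)$ is a nonempty curve of codimension $2$; the Hilbert--Burch theorem then shows $I$ is perfect of codimension $2$ with minimal resolution
\[ 0\to S(-d)^{r-1}\xrightarrow{\ \Psi\ }\bigoplus_{i=1}^r S(-(d-d_i))\to I\to 0, \]
minimal because every entry of $\Psi$ has positive degree. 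Reading off the invariants of \eqref{eq:HB} gives $e_1=d-d_r$ and $l_1=\cdots=l_{r-1}=d$. (I assume $r\geq 2$; for $r=1$ the surface $D_1$ has isolated singularities and Proposition \ref{prop1B} already gives $st(f)\leq T$.)

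The second step is to check the remaining hypothesis of Theorem \ref{thm:ques16}, that $(J_f:I)$ is $0$-dimensional. I would verify $I=J_f$ locally at a generic point $p$ of each component $D_i\cap D_j$: there only $D_i,D_j$ pass through $p$, both smooth and transverse, so in local coordinates $f=xy\,u$ with $u$ a unit, whence $f_x=y\cdot(\text{unit})$, $f_y=x\cdot(\text{unit})$, $f_z\in(xy)$ and $(J_f)_p=(x,y)$, while $g_i=y\cdot(\text{unit})$, $g_j=x\cdot(\text{unit})$, $g_k\in(xy)$ for $k\neq i,j$ give $I_p=(x,y)$. Off $V(I)$ one of $I$, $J_f$ already contains a unit, so $I/J_f$ is supported only on the finitely many bad points (singular points of the $D_i$, triple points, non-transverse points), proving $(J_f:I)$ is $0$-dimensional. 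Feeding $e_1=d-d_r$, $l_1=d$ into Theorem \ref{thm:ques16} yields
\[ st(f)\leq\max\{4d-7-2e_1,\;l_1-3\}=\max\{2d+2d_r-7,\;d-3\}=2d+2d_r-7, \]
the first term dominating because $d+2d_r\geq 4$; this is \eqref{eq:stfi}. Finally, $r\geq 2$ forces $d_r\leq d-1$, so $st(f)\leq 4d-9<T=4(d-2)$, giving $st(f)\leq T$.

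For the generic statement I would run the same local computation at a triple point $p\in D_i\cap D_j\cap D_k$: there $f=xyz\,u$ and $(J_f)_p=(yz,xz,xy)=I_p$, so since no four surfaces meet we get $J_f=I$ at every point of $V(I)$ and $I/J_f$ has finite length. Because $I$ is perfect of codimension $2$ it is saturated, while finite length of $I/J_f$ gives $I/J_f\subseteq H^0_\mm(M(f))=I_f/J_f$, i.e.\ $I\subseteq I_f$; thus $J_f\subseteq I\subseteq I_f$ with $I$ saturated forces $I_f=\sat(J_f)=\sat(I)=I=(g_1,\ldots,g_r)$.

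The delicate point, which I expect to be the main obstacle, is the sharpness of \eqref{eq:stfi}: I would show $N(f)_{\eta_0}\neq 0$ for $\eta_0=2d+2d_r-8$ through the column spectral sequence of $\CC_\mm^\bullet(E_\bullet)$ from the proof of Theorem \ref{thm:ques16}. In degree $\eta_0$ the summand of $E_4$ indexed by $(1,1)$ contributes its socle, so $H^4_\mm(E_4)_{\eta_0}\neq 0$, whereas the top degrees of $H^4_\mm(E_3)$ and of $H^1_\mm,H^2_\mm$ all sit strictly below $\eta_0$ (the gap being exactly $d_r-1$), so $N(f)_{\eta_0}=\ker(H^4_\mm(E_4)\to H^4_\mm(E_3))_{\eta_0}=H^4_\mm(E_4)_{\eta_0}\neq 0$. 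By Grothendieck--Serre the Hilbert function of $M(f)$ then differs from its Hilbert polynomial in degree $\eta_0$, forcing $st(f)\geq\eta_0+1$ and hence equality in \eqref{eq:stfi}. This argument needs $d_r\geq 2$ so that the gap is positive; the excluded case $d_r=1$ is a generic hyperplane arrangement, where the equality is Proposition \ref{prop12}. The crux is thus the non-cancellation at the top degree, which is why the sharpness must be read off the explicit Buchsbaum--Rim complex rather than from the numerical bound alone.
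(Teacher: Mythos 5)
Up to the case $d_r\geq 2$, your proposal coincides with the paper's proof: the same ideal $I=(g_1,\ldots,g_r)$, the same bidiagonal Hilbert--Burch matrix $\Psi$ giving $e_1=d-d_r$ and $l_1=\cdots=l_{r-1}=d$, the application of Theorem \ref{thm:ques16}, the saturation argument for $I_f=I$, and the sharpness argument via $N(f)_{\eta_0}=\ker\bigl(H^4_\mm(E_4)\to H^4_\mm(E_3)\bigr)_{\eta_0}=H^4_\mm(E_4)_{\eta_0}\neq 0$ when $\eta_0>\eta_1$, i.e.\ $d_r\geq 2$. The only real difference there is cosmetic: where the paper checks the support conditions by exhibiting nonvanishing $r$-minors of the concatenated matrix $(\Psi\mid H)$ (via Fitting ideals), you check $I_p=(J_f)_p$ by direct local computations at generic points of the double curves and at triple points; the content is identical. (One sentence of yours is loose: ``off $V(I)$ one of $I$, $J_f$ contains a unit'' does not by itself remove points of $V(J_f)\setminus V(I)$ from the support of $I/J_f$; what saves you is that such points are singular points of individual $D_i$'s, which you correctly list among the finitely many bad points.)

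The genuine gap is your disposal of the case $d_r=1$ (all $D_i$ planes) in the equality claim. Proposition \ref{prop12} computes $\reg M(f)$, not $st(f)$: regularity only gives the upper bound $st(f)\leq \reg M(f)+\pd M(f)-n$, whereas equality in \eqref{eq:stfi} needs the lower bound $st(f)\geq 2d-5$, which is not contained in that proposition; moreover Proposition \ref{prop12} assumes $d>n=3$, so two and three planes are outside its scope entirely. Worse, no citation can close this case, because the claimed equality is false for small plane arrangements. For three generic planes take $f=x_1x_2x_3$: then $J_f=(x_2x_3,x_1x_3,x_1x_2)=I=I_f$ is perfect of codimension $2$, so $M(f)$ is Cohen--Macaulay, $N(f)=0$, and $H(M(f))(k)=3k+1=P(M(f))(k)$ for all $k\geq 0$, whence $st(f)\leq 0<1=2d+2d_r-7$; similarly four generic planes give $st(f)=1<3$. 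This also exposes a flaw at exactly this spot in the paper's own proof: in its display for $E_3$ the summand $S(-2(d-1))^4$ should be $\oplus_{i=1}^r S(-2(d-1))^4=S(-2(d-1))^{4r}$, so the dimension count requires $\binom{r+1}{2}>4r$, i.e.\ $r\geq 8$, not $r>2$, and the three- and four-plane examples show the equality genuinely fails for small $r$. So your instinct to isolate $d_r=1$ was sound, but Proposition \ref{prop12} cannot do the job, and the equality statement itself needs $d_r\geq 2$ (or $r$ large) to be correct; only the two-plane case, which the paper settles by direct computation, survives among the small plane arrangements.
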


\begin{proof}
Let $I$ be the ideal of $S$ generated by $g_1,\ldots,g_r$. A straightforward computation shows that $J_f \subset I$. Actually, we have the equality
$$
\left( \partial_0 f \ \partial_1 f \ \partial_2 f \ \partial_3 f \right)=
\left( g_1 \ g_2 \ \cdots \ g_r \right) \cdot
\left(
\begin{array}{cccc}
	\partial_0 f_1 & \partial_1 f_1 & \partial_2 f_1 & \partial_3 f_1 \\
	\partial_0 f_2 & \partial_1 f_2 & \partial_2 f_2 & \partial_3 f_2 \\
	\vdots & \vdots & \vdots & \vdots \\
	\partial_0 f_r & \partial_1 f_r & \partial_2 f_r & \partial_3 f_r 
\end{array}
\right)
$$
where the matrix on the right is the Jacobian matrix of the $f_i$'s; we denote it $H$. It defines a graded map 
$$ R(-d+1)^4 \xrightarrow{H} \oplus_{i=1}^r R(-d+d_i).$$ 
On the other hand, consider the following matrix: 
$$ 
\Psi=\left( 
\begin{array}{cccccc}
f_1 & 0 &  &  & \cdots & 0 \\
-f_2 & f_2 & 0 & & \cdots & \\ 
0 & - f_3 & f_3 & \ddots & \cdots & \vdots  \\	
\vdots &  0 & \ddots & \ddots & 0 &  \\
\vdots &  0 & \ddots & \ddots & f_{r-2} & 0 \\
 & \ldots &  & 0 & -f_{r-1} & f_{r-1} \\	
0 & \ldots &  &  & 0 & -f_r 
\end{array}
\right).
$$
It is of size $r\times (r-1)$ and its $(r-1)$-minors coincide with the $g_i$'s up to sign. Therefore, assuming that $I$ has codimension 2, i.e.~that $f$ is a reduced polynomial, then Hilbert-Burch Theorem implies that $I$ admits the following minimal free resolution 
\begin{equation}\label{eq:HBr}
 0 \rightarrow \oplus_{i=1}^{r-1} S(-d) \xrightarrow{\Psi} \oplus_{i=1}^{r}S(-d+d_i) \rightarrow S \rightarrow S/I \rightarrow 0	
\end{equation}

Now, as already used in the proof of Theorem \ref{thm:ques16}, the concatenation of the matrices $\Psi$ and $H$ provides a free presentation of $I/J_f$. Therefore, we deduce that the ideal $I_r(\Psi\oplus H)$ of $r-$minors of this concatenated matrix has the same radical as the ideal $(J_f:I)$ (using a classical property of Fitting ideals \cite[Proposition 20.7]{Eis1}). In addition, examining the matrix $\Psi \oplus H$ we notice that
\begin{itemize}
	\item If $p \in D_i \setminus \cup_{j\neq i} D_j$ and $p \notin \mathrm{Sing}(D_i)$, then $p \notin V(J_f:I)$. This is because the partials $\partial_i (f)$ are obtained as $r$-minors of $\Psi \oplus H$ (take $\Psi$ and add a column of $H$).
	\item If for all points $p \in D_i \cap D_j$, except finitely many, the intersection of $D_i$ and $D_j$ at $p$ is transverse and $p$ is not contained in any other surfaces $D_l$, then there exists an $r$-minor of $\Psi \oplus H$ that does not vanish: take $\Psi$, remove number $i$ and $j$ and replace them by those 2 columns of $H$ corresponding to $\partial_k$ and $\partial_l$ such that the Jacobian minor $\partial_k f_i \partial_l f_j - \partial_k f_j  \partial_l f_i$ is nonzero.
\end{itemize} 
Under our assumptions, we deduce that $(J_f:I)$ is supported on finitely many points and hence one can apply Theorem \ref{thm:ques16} with the data: $e_1=d-d_r$ and $l_1=d$. The conclusion \eqref{eq:stfi} follows because we always have $2d+2d_r-7\geq d-3$ as $d\geq 2$ and $d_r\geq 1$. 

Now, we turn to the proof of the second part of this theorem. Pushing further the above analysis of $r$-minors of the matrix $\Psi\oplus H$, one can show in the same way that for all point $p\in V$ there exists an $r$-minor of the matrix $\Psi\oplus H$ that does not vanish at $p$, under our genericity assumptions. Therefore, the ideal $(J_f:I)$ defines an empty algebraic variety, which means that the ideals $J_f$ and $I$ have the same saturation, so that $J_f^\sat =I_f =I^\sat =I$. Thus, taking again the proof of Theorem \ref{thm:ques16}, this latter property implies that 
$$H^0_\mm(I/J_f)\simeq \ker \left( H^4_\mm(E_4) \rightarrow H^4_\mm(E_3)\right)$$
because $H^1_\mm(H_i(E_\bullet))=0$ for all $i\geq 0$. If $\eta_0>\eta_1$, then $H^4_\mm(E_3)_{\eta_0}=0$ whereas $H^4_\mm(E_4)_{\eta_0}\neq 0$ so we deduce that $H^0_\mm(I/J_f)_{\eta_0} \neq 0$. As we already observed, the condition  $\eta_0>\eta_1$ holds if and only if $e_1<d-1$, and since $e_1=d-d_r$, it holds if and only if $d_r\geq 2$. It remains to consider the case $d_1=\cdots=d_r=1$, for which we have $l_1=\ldots=l_{r-1}=d=r$ and $\eta_0=\eta_1=2d-6$.  Again, from the proof of Theorem \ref{thm:ques16} we have 
$$E_4\simeq S(-2(d-1))^{\binom{r+1}{2}} \ \mathrm{ and } \ E_3\simeq S(-2(d-1))^4 \oplus S(-2(d-1)+1)^{r(r-1)}.$$
It follows that 
$$H^4_\mm(E_4)_{\eta_0}\simeq H^4_\mm(S)_{-4}^{\binom{r+1}{2}}\simeq \C^{\binom{r+2}{2}}$$
and 
$$H^4_\mm(E_3)_{\eta_0}\simeq H^4_\mm(S)_{-4}^{4}\oplus H^4_\mm(S)_{-3}^{r(r-1)}\simeq \C^4.$$
Therefore, if $\binom{r+1}{2}>4$, i.e.~if $r>2$, then necessarily $H^0_\mm(I/J_f)_{\eta_0} \neq 0$. In the end, the theorem is proved except for the case where $f$ is the product of two linearly independent planes ($d=2$, $r=2$, $d_1=d_2=2$). But this case can be treated directly from the definitions: $M(f)$ is isomorphic to a graded polynomial ring in two variables, hence $H(M(f))(k)=\max\{k+1,0\}$ and $P(M(f))(k)=k+1$ for all $k\in \mathbb{Z}$, so that $st(f)=-1=2\cdot2+2\cdot1-7$.
\end{proof}

\begin{ex}
\label{ex1}
In this example we show that the inequality for $st(f)$ in Theorem \ref{thmSA} can be either strict, or an equality, even for $r=2$ surfaces.
First we consider the family of surfaces $V_{d+1}=V(f_1f_2)$, where
$f_1=x_3$ and $f_2=x_0^{d}+x_1^{d}+x_2^{d}$ with $d\geq 2$.
Then $D_1=V(f_1)$ is a plane, $D_2=V(f_2)$ is a surface with a singular point at $p=(0:0:0:1)$ with local Tjurina number $\tau(D_2,p)=(d-1)^3$, and the intersection $C=D_1\cap D_2$ is transverse.
Hence Theorem \ref{thmSA} applies and gives the inequality
$$st(f) \leq 2(d+1)+2d-7=4d-5.$$
Using that the Hilbert polynomial $P(M(f))$ is a linear function of the form $ak+b$ 
and the values for $a,b$ given in \cite[Formula (2.6) and subsection (3.1)]{DStFS}, we see that
$$P(M(f))(k)=ak+b=\deg(C) k+ \chi(C,\OO_C)+ \tau_0(V)=dk-\frac{d(d-3)}{2}+(d-1)^3,$$
since in this case $\tau_0(V)=\tau(D_2,p)$.
A direct computation of the Hilbert series for $V_{d+1}$ when $2\leq d\leq 5$ using SINGULAR, shows that in all these cases
$$st(f)=3d-5<4d-5.$$
Next we consider the family of surfaces $V'_{2d}=V(f_1f_2)$, where
$f_1=x_1^{d}+2x_2^{d}+x_3^{d}$ and $f_2=x_0^{d}+x_1^{d}+x_2^{d}$ with $d\geq 2$.
Then $D_1=V(f_1)$ is a surface with a singular point at $q=(1:0:0:0)$ with local Tjurina number $\tau(D_1,q)=(d-1)^3$, the surface $D_2$ is as above, and the intersection $C=D_1\cap D_2$ is transverse.
Hence Theorem \ref{thmSA} applies again and gives the inequality
$$st(f) \leq 2(2d)+2d-7=6d-7.$$
As above, for the Hilbert polynomial, we get 
$$P(M(f))(k)=d^2k+d^3-4d^2+6d-2,$$
since in this case $\tau_0(V)=\tau_q(D_1,q)+\tau_p(D_2,p)$.
A direct computation of the Hilbert series for $V'_{2d}$ when $2\leq d\leq 5$ shows that in all these cases $st(f)=6d-7.$
\end{ex}
\begin{rk}
\label{rkex1}
By Theorem \ref{thmSA} when $r=2$ then $st(f)=2d+2d_2-7,$ the surfaces $D_1$ and $D_2$ are smooth, and the intersection $C=D_1\cap D_2$ is transverse; 
see \cite[Question 3.2]{DStFS}.
\end{rk}

\section{Regularity of $M(f)$ for hypersurfaces in $\PP^n$ singular in codimension one}\label{sec:quadreg}
The main result in this section is in contrast to the results in
the previous sections: we prove that a reduced hypersurface $V(f)
\subseteq \PP^n$ of degree $d$ which has a non-reduced codimension one singular component can have 
\[
\reg M(f_d) \sim \mathcal{O}(d^2). 
\]
We prove the result for $n=3$; the general case then follows by coning. In fact, Theorem~\ref{Codim1SurfReg} below yields a stronger result,
giving an explicit minimal first syzygy of high degree. The surfaces which we analyze are bigraded, so have a singular locus containing two disjoint lines. We utilize the extra structure provided by the bigrading in an essential way. 
\begin{thm}\label{Codim1SurfReg}
Let $f_{(k,d-k)} \in \C[x_0,\ldots,x_3]$ be a general bihomogeneous polynomial of bidegree $(k,d-k)$ with $k, d-k \ge 1$; note that $f \in (x_0,x_1)^k \cap (x_2,x_3)^{d-k}$. Let $D=V(f_{(k,d-k)})$ be the corresponding surface in $\PP^3$, which is of degree $d$ in the $\Z$-grading. Then $D$ is reduced, and if $d-k$ is odd, $M(f_{(k,d-k)})$ has a minimal first syzygy of bidegree 
\[(2k,0) + \frac{d-k-1}{2}(3k-2,3).\]
Hence, in the $\mathbb{Z}$-grading 
\[
\reg{M(f_{(k,d-k)})} \ge \frac{3k+1}{2}d -\frac{3k^2+5}{2}
\]
and taking $k=\frac{d-1}{3}$ yields
\[
\reg{M(f_{(\frac{d-1}{3}, \frac{2d+1}{3})})} \ge \frac{1}{3}(d^2+d-8). 
\]
\end{thm}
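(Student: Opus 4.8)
The plan is to convert the regularity bound into the existence of a single bigraded minimal first syzygy, and then to produce that syzygy by a generic–rank count. First I record the numerics. Writing $e=d-k$ and $m=\tfrac{e-1}{2}$ (an integer since $e$ is odd), the asserted syzygy has total $\Z$-degree $\delta=2k+m(3k-2)+3m=2k+m(3k+1)$. Because $M(f)=S/J_f$ is cyclic, a relation among the four partials $f_0,\dots,f_3$ is a minimal generator of the term $F_2$ of the minimal free resolution, so a minimal first syzygy of $\Z$-degree $\delta$ forces $\reg M(f)\ge \delta-2$; a direct computation gives $\delta-2=\tfrac{3k+1}{2}d-\tfrac{3k^2+5}{2}$, and specializing $k=\tfrac{d-1}{3}$ yields $\tfrac13(d^2+d-8)$. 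Thus everything reduces to exhibiting a minimal first syzygy of bidegree $\mathbf b=(2k+m(3k-2),\,3m)$, and to checking that $D$ is reduced, which holds for general $f$ since the bidegree-$(k,e)$ system has no fixed component and its general member is therefore reduced.

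For the construction I work over $R=\C[x_0,x_1]$, write $S=R[x_2,x_3]$, and use the two Euler relations $x_0f_0+x_1f_1=kf$ and $x_2f_2+x_3f_3=ef$. Set $P=(f_0,f_1)$ and $Q=(f_2,f_3)$, so $J_f=P+Q$ and $f\in P\cap Q$. For general $f$ each of $f_0,f_1$ and $f_2,f_3$ is a regular sequence (the common zero locus is of the expected codimension two, containing the line $L_2=V(x_2,x_3)$, respectively $L_1=V(x_0,x_1)$), so $P,Q$ are complete intersections and the essential, non-Koszul first syzygies of $J_f$ are exactly the minimal generators of the Koszul homology $H_1(f_0,f_1,f_2,f_3)=\tor_1^S(S/P,S/Q)$. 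Computing this from the Koszul resolution of $S/P$ tensored with $S/Q$, an essential first syzygy of bidegree $\mathbf b$ is a pair $(A_2,A_3)$ of $x_2,x_3$-degree $m$ with $A_2f_2+A_3f_3\in P$, taken modulo the Koszul syzygy $(f_3,-f_2)$ and modulo $P$.

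The heart of the argument is a dimension count over the generic fibre $K=\mathrm{Frac}(R)$. Over $K[x_2,x_3]$ the forms $f_0,f_1$ have degree $e=2m+1$, the forms $f_2,f_3$ have degree $2m$, and for general $f$ both pairs are coprime. Intersecting with $P$ in $x_2,x_3$-degree $3m$, the multiplication map $K[x_2,x_3]_m^2\to K[x_2,x_3]_{3m}/P_{3m}$ has source dimension $2m+2$ and target dimension $m+1$, so its kernel has dimension at least $m+1$; meanwhile the Euler-induced syzygies $(x_2,x_3)\cdot K[x_2,x_3]_{m-1}$ fill exactly an $m$-dimensional subspace of this kernel, leaving at least one essential new class. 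The same count in $x_2,x_3$-degree $\mu<m$ leaves only $2\mu+1-2m\le 0$ beyond the Euler part, so for general $f$ (maximal rank of all the multiplication maps) no essential class appears below $x_2,x_3$-degree $3m$. Since all Koszul syzygies of $J_f$ live in $x_2,x_3$-degree $\ge 2e-2=4m>3m$, in this range $H_1$ coincides with the full syzygy module, so this count simultaneously produces the new syzygy and shows its $x_2,x_3$-degree is exactly $3m$.

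Two tasks remain, and they are where the bigrading must be used in earnest; this is the main obstacle. First, the $x_0,x_1$-degree (the first component of $\mathbf b$) must be pinned to $2k+m(3k-2)$: the essential degree-$3m$ classes form an $R$-module, generically of rank one, whose minimal generating degree has to be extracted from the explicit Hankel/shift structure of the coefficient forms $c_0,\dots,c_e$ of $f$ (the coefficient windows of $f_2,f_3$ being $(c_0,\dots,c_{e-1})$ and $(c_1,\dots,c_e)$), again via a maximal-rank genericity argument. Second, minimality, i.e. $\tor_2^S(M(f),\C)_{\mathbf b}\ne 0$, must be established: since multiplication by $x_0,x_1$ preserves $x_2,x_3$-degree while $x_2,x_3$ raises it, and since the only essential classes of $x_2,x_3$-degree $\le 3m$ are the Euler class (in degree $2m+1$) and our new class, it suffices to verify that the bidegree-$\mathbf b$ generator is independent of the Euler-induced elements and of $x_0,x_1$-multiples coming from lower $x_0,x_1$-degree, which is precisely the content of pinning the first component. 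I expect the genuinely delicate point to be controlling these ranks for general $f$ and isolating the exact first component, rather than the existence of the new syzygy, which the generic-fibre count already guarantees.
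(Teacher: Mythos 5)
Your reduction of the theorem to the existence of a minimal first syzygy of bidegree $\mathbf{b}=(2k+m(3k-2),3m)$, $m=\tfrac{d-k-1}{2}$, is correct (the numerics check out), and your generic-fibre count is sound: the kernel of $K[x_2,x_3]_m^2\to K[x_2,x_3]_{3m}/P_{3m}$ has dimension at least $m+1$ while the Euler-induced classes account for exactly $m$, so an essential syzygy of $x_2,x_3$-degree $3m$ exists; this parallels the paper's own expected-rank count. But the theorem is not proved by existence alone: the regularity lower bound comes entirely from the \emph{first} component of $\mathbf{b}$, i.e.\ from the $x_0,x_1$-degree $2k+m(3k-2)$ of that syzygy, and working over $K=\mathrm{Frac}(\C[x_0,x_1])$ discards precisely this information. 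You acknowledge this yourself (``two tasks remain''), but those two tasks --- pinning the first component and proving minimality --- together with the unproven maximal-rank claims for the multiplication maps in degrees $\mu<m$ and at $3m$, \emph{are} the content of the paper's proof. What you have written is a correct plan whose hard steps are deferred, and without them no bound on $\reg M(f)$ follows.

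Concretely, the paper fills this gap as follows. Working over $A=\C[x_0,x_1]$, it slices the presentation of $J_f$ in each $x_2,x_3$-degree $\eta$, obtaining a map $\phi_\eta$ of free $A$-modules whose matrix consists of Sylvester blocks in the coefficient forms $h_0,\dots,h_{2m+1}\in A_k$ of $f$ (this is your ``Hankel/shift structure''). After deleting the columns spanned by the bi-Euler syzygy, it proves the reduced matrix has maximal rank for every $\eta\le 3m$ by exhibiting one explicit nonzero maximal minor: triangular blocks contributing $\left(\partial_{x_0}h_0\right)^{\eta-2m}$ and $\left(\partial_{x_1}h_{2m+1}\right)^{\eta-2m}$, times a determinant $\Delta$ shown nonzero by degenerating to $h_{\eta-2m+1}=\dots=h_{2m}=0$, where it becomes triangular; since nonvanishing of this polynomial in the coefficients of $f$ is an open condition, genericity closes all the rank claims at once. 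This yields that the non-Euler part $M_\eta$ of the syzygy slice vanishes for $\eta<3m$ and that $M_{3m}$ is a free rank-one $A$-module; its generator degree $e=(3m+2)k-2m$ is then forced by the top exterior power $\wedge^{3m+2}$ of the source of the reduced full-rank map. That computation is exactly what ``pins the first component,'' and it simultaneously gives minimality: nothing of lower $x_2,x_3$-degree (only Euler multiples) or lower $x_0,x_1$-degree (since $M_{3m}\simeq A(-e)$) can generate the class. Until you supply an analogous certificate --- some explicit $f$, or degeneration, verifying your maximal-rank hypotheses, plus a determinantal computation of the $A$-degree of the kernel --- your proposal establishes only the existence of an essential syzygy of unknown $x_0,x_1$-degree, which gives no lower bound on the regularity.
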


\begin{proof} The fact that $D$ is reduced follows from the hypothesis that $f$ is general. Define the (standard) graded polynomial ring $A=\C[x_0,x_1]$, so that the polynomial ring $S=\C[x_0,\ldots,x_3]=A[x_2,x_3]$ inherits a bigraded structure. In particular, $f_{(k,d-k)} \in S_{k,d-k}$. Denoting by $f_i$ the partial derivative of $f_{(k,d-k)}$ with respect to $x_i$, we have $f_0,f_1 \in S_{k-1,d-k}$ and $f_2,f_3 \in S_{k,d-k-1}$. In other words, the Jacobian ideal $J_{f_{(k,d-k)}}=J=(f_0,\ldots,f_3)$ has a bigraded presentation
	\begin{equation}\label{eq:presJ}
		S(-k+1,-d+k)^2\oplus S(-k,-d+k+1)^2 \xrightarrow{\phi} S \rightarrow S/J \rightarrow 0.
	\end{equation}
	
Since $x_0f_0+x_1f_1=kf_{(k,d-k)}$ and $x_2f_2+x_3f_3=(d-k)f_{(k,d-k)}$, the ideal $J$ admits the bi-Euler syzygy $((d-k)x_0,(d-k)x_1,kx_2,kx_3)$. This syzygy is of bidegree $(k,d-k)$ in the first syzygy module $\mathrm{Syz}$ of $J$, that is the kernel of $\phi$.

\medskip

Henceforth, we use {\em minimal} to mean a syzygy of the smallest possible degree with respect to the variables $\{x_2,x_3\}$, which is not the bi-Euler syzygy. Our goal is to find a minimal syzygy of $J$; to do this we consider the graded slice of \eqref{eq:presJ} in degree $\eta \in \N$ with respect to $\{x_2,x_3\}$:
$$  0 \rightarrow \mathrm{Syz}_{*,\eta} \rightarrow A(-k+1)^{2(\eta-d+k+1)}
\oplus A(-k)^{2(\eta-d+k+2)} \xrightarrow {\phi_\eta} A^{\eta+1}.$$
By the Hilbert Syzygy Theorem, $\mathrm{Syz}_{*,\eta}$ is a free $A$-module. Moreover, taking into account the bi-Euler syzygy we have 
$$ \mathrm{Syz}_{*,\eta} \simeq M_\eta \oplus A(-k)^{\eta-d+k+1}.$$

In order to have control on the degree of a minimal syzygy, we seek the smallest value of $\eta$ such that $M_\eta$ is nonzero. If in addition $M_\eta$ has rank one, then the minimal syzygy will be determinantal. If the maps $\phi_{\eta}$ have expected ranks for small values of $\eta$ then will occur when 
$$2(\eta-d+k+1)+2(\eta-d+k+2)-(\eta-d+k+1)=\eta+2,$$
or equivalently $2\eta=3(d-k-1)$. So, if $d-k$ is odd, i.e.~if $d=k+2\mu+1$ for some integer $\mu\in \N$, then a minimal syzygy is expected in degree $(e,\eta=3\mu)$, $e\in \N$. 
We claim that the maps $\phi_\eta$ have expected ranks for all $\eta\leq 3\mu$:
\begin{equation}\label{eq:rankphi}
	M_{\eta}=0 \mathrm{ \ for \ all \ } \eta<3\mu \mathrm{ \ and \ } M_{3\mu}\simeq A(-e),
\end{equation}
which implies that this syzygy of degree $(e,3\mu)$ is a minimal syzygy of $J$. To prove the claim we focus on the particular structure of the maps $\phi_\eta$. 

\medskip

In standard monomial bases, the matrix of $\phi_\eta$ is built from Sylvester blocks:
\begin{equation}\label{eq:4blockSylv}
\left( 
\begin{array}{c|c|c|c}
 \vdots & \vdots & \vdots &  \vdots \\
\mathrm{Sylv}_{\eta}(f_0) & \mathrm{Sylv}_{\eta}(f_1) & \mathrm{Sylv}_{\eta}(f_2) & \mathrm{Sylv}_{\eta}(f_3) \\
 \vdots & \vdots & \vdots &  \vdots 
\end{array}
\right).
\end{equation}
The polynomials $f_i$ are seen as homogeneous polynomials in $x_2,x_3$. The blocks  
$\mathrm{Sylv}_{\eta}(f_0)$ and $\mathrm{Sylv}_{\eta}(f_1)$ are of size $(\eta+1)\times (\eta-2\mu)$ since $f_0$ and $f_1$ are of degree $2\mu+1$ in $x_2,x_3$, and the blocks $\mathrm{Sylv}_{\eta}(f_2)$ and $\mathrm{Sylv}_{\eta}(f_3)$ are of size $(\eta+1)\times (\eta-2\mu+1)$ since $f_2$ and $f_3$ are of degree $2\mu$ in $x_2,x_3$. In view of these dimensions, it is clear that $M_{\eta}=0$ for all $\eta<2\mu$. Also, the matrix of $\phi_{2\mu}$ has two columns, namely the coefficients of $f_2$ and $f_3$ in degree $\eta=2\mu$; $\phi_{2\mu}$ is clearly injective and hence $M_{2\mu}=0$.

For $\eta\geq 2\mu+1$ the bi-Euler syzygy appears and yields
$\eta-2\mu$ independent syzygies in degree $\eta$ that correspond to
the following relations among the columns of the matrix of
$\phi_\eta$, for all $i=0,\ldots,\eta-2\mu-1$: 
\begin{align*}
	0 &= x_2^{\eta-2\mu-1-i}x_3^{i}\left( (d-k)x_0f_0+(d-k)x_1f_1-kx_2f_2-kx_3f_3 \right) \\
	&= (d-k)x_0(x_2^{\eta-2\mu-1-i}x_3^{i}f_0)+(d-k)x_1(x_2^{\eta-2\mu-1-i}x_3^{i}f_1)  \\
	&\hspace{1em}   - k(x_2^{\eta-2\mu-i}x_3^{i}f_2) - k(x_2^{\eta-2\mu-1-i}x_3^{i+1}f_3)
\end{align*}
Removing these relations from the matrix of $\phi_\eta$, we obtain a new map whose matrix is 
\begin{equation}\label{eq:fullSylv}
\left( 
\begin{array}{c|c|c|c}
 \vdots & \vdots & \vdots &  \vdots \\
\mathrm{Sylv}_{\eta}(f_0) & \mathrm{Sylv}_{\eta}(f_1) & \mathrm{Sylv}_{\eta}(f_2) & x_2^{\eta-2\mu}f_3 \\
 \vdots & \vdots & \vdots &  \vdots 
\end{array}
\right).
\end{equation}
It is obtained by removing the last $\eta-2\mu$ columns to  \eqref{eq:4blockSylv} (the block on the right is a single column). The matrix of \eqref{eq:fullSylv} is of size $(\eta+1)\times(3\eta-6\mu+2)$ and its kernel is $M_\eta$. When $\eta< 3\mu$ it has less columns than rows and to prove the claim we must show that those columns are $A$-independent, or equivalently by McCoy's Lemma, that there exists a nonzero minor of size $3\eta-6\mu+2$. To achieve this, we examine in more detail the Sylvester blocks. We set (recall $d-k=2\mu+1$)
\begin{equation*}
f_{(k,d-k)}=x_2^{2\mu+1}h_{0}(x_0,x_1)+x_2^{2\mu}x_3h_1(x_0,x_1)+\ldots
+x_3^{2\mu+1}h_{2\mu+1}(x_0,x_1)
\end{equation*}
where $h_i(x_0,x_1)$ are homogeneous polynomials of degree $k$ in $A$.
With this notation, the Sylvester blocks can be written more explicitly; for instance we have
$$
\mathrm{Sylv}_\eta(f_0)=\left( 
\begin{array}{ccccc}
\partial_{x_0}h_0 & 0  & \cdots & 0 \\
\partial_{x_0}h_1 & \partial_{x_0}h_0  & \cdots & 0 \\	
\vdots & \vdots &  \ddots &  0 \\
0 &  \cdots & 0 & \partial_{x_0}h_{2\mu+1} \\	
\end{array}
\right)$$
which is of size $(\eta+1)\times(\eta-2\mu)$ and whose rows are indexed from top to bottom by the monomial basis $\{x_2^\eta,x_2^{\eta-1}x_3,\ldots,x_3^\eta \}$. Thus the top maximal minor of this matrix is a lower triangular matrix of determinant $\left(\partial_{x_0}h_0\right)^{\eta-2\mu}$. Similarly, the bottom minor of $\mathrm{Sylv}_\eta(f_1)$ is a lower triangular matrix of determinant $\left(\partial_{x_1}h_{2\mu+1}\right)^{\eta-2\mu}$. Removing the rows of these two minors from \eqref{eq:fullSylv} we are left with the following submatrix of size 
\[(4\mu-\eta+1)\times(\eta-2\mu+2)\]
whose rows are indexed from top to bottom by the monomial basis $\{x_2^{2\mu}x_3^{\eta-2\mu}, \ldots, x_2^{\eta-2\mu}x_3^{2\mu}\}$:
$$
\left(
\begin{array}{ccc|c}
 (4\mu-\eta) h_{\eta-2\mu}  &  \ldots & (2\mu+1)h_0 & (\eta-2\mu+1)h_{\eta-2\mu+1} \\

 (4\mu-\eta-1) h_{\eta-2\mu+1}  & \ldots & (2\mu)h_1 & (\eta-2\mu)h_{\eta-2\mu+2} \\
 
\vdots &   & \vdots &  \ldots \\

h_{2\mu} & \ldots &  (\eta-2\mu+1)h_{4\mu-\eta} & (2\mu+1)h_{2\mu+1} \\	
\end{array}\right).
$$
From here, one can choose the $(\eta-2\mu+2)$-minor built with the top minor of maximal size $(\eta-2\mu+1)$ in the left block, whose determinant is denoted by $\Delta$, and the bottom minor of size 1 in the right block, whose determinant is obviously equal to $(2\mu+1)h_{2\mu+1}$. In the end, we have selected a $(3\eta-6\mu+2)$-minor in \eqref{eq:fullSylv} whose determinant is equal to 
\begin{equation}\label{eq:det}
(2\mu+1)h_{2\mu+1} 
\left(\partial_{x_0}h_0\right)^{\eta-2\mu}
\left(\partial_{x_1}h_{2\mu+1}\right)^{\eta-2\mu} \Delta.	
\end{equation}
To see that $\Delta$ is nonzero, we observe that if \[h_{\eta-2\mu+1}=\cdots=h_{2\mu}=0\] then $\Delta$ is the determinant of an upper triangular matrix and its diagonal entries  are all equal to $h_{\eta-2\mu}$ up to a nonzero multiplicative constant. Therefore, by our genericity assumption the determinant \eqref{eq:det} is nonzero. 

The case $\eta=3\mu$ follows in similar fashion. The difference in this case is that the matrix \eqref{eq:fullSylv} has one more column than row; it is of size $(\mu+1)\times(\mu+2)$ and we have to show that it is of rank $\mu+1$. We proceed as previously by selecting the same minors in the blocks $\mathrm{Sylv}_\eta(f_0)$ and $\mathrm{Sylv}_\eta(f_1)$. Then, in the remaining part of the matrix, the minor whose determinant is equal to $\Delta$ is of size $\mu+1$ and hence there is no need to select an element in the last column. The determinant $\Delta$ is nonzero by the same argument and the claimed property \eqref{eq:rankphi} is proved. 

\medskip

Finally, it remains to determine the degree $e$ with respect to $x_0,x_1$ of the minimal syzygy we found, i.e.~the integer $e$ that appears in the exact sequence
$$ 0\rightarrow A(-e)\oplus A(-k)^\mu \rightarrow A(-k+1)^{2\mu}\oplus A(-k)^{2\mu+2} \xrightarrow{\phi_{3\mu}} A^{3\mu+1}.$$
As explained above, the contribution of the bi-Euler syzygy in this sequence corresponds to a well identified block matrix. It follows that the minimal syzygy we are interested is a generator of the kernel of the matrix \eqref{eq:fullSylv}, which corresponds to the following restriction $\overline{\phi}_{3\mu}$ of the map $\phi_{3\mu}$:
$$ 0\rightarrow A(-e) \rightarrow A(-k+1)^{2\mu}\oplus  A(-k)^{\mu+1}\oplus A(-k) \xrightarrow{\overline{\phi}_{3\mu}} A^{3\mu+1}.$$
Therefore, we deduce that
$$A(-e)\simeq \wedge^{3\mu+2}\left(A(-k+1)^{2\mu}\oplus  A(-k)^{\mu+1}\oplus A(-k) \right)
\simeq A(-(3\mu+2)k+2\mu)$$
and hence that
\[
e=(3\mu+2)k-2\mu.
\]
Since $d-k=2\mu+1$, we deduce that $M(f_{(k,d-k)})$ has a minimal first syzygy of bidegree 
\[
\begin{array}{ccc}
(e,3\mu)&=&((3\mu+2)k-2\mu,3\mu)\\ 
              &=&(2k,0)+\mu(3k-2,3)\\
               &=&(2k,0)+\frac{d-k-1}{2}(3k-2,3)
               \end{array}
               \]
as claimed.
\end{proof}

\begin{rk} From the proof of this theorem, we see that the minimal syzygy of $M(f_{k,d-k})$ we obtained is completely explicit: it is a determinantal syzygy that is built from the minors of maximal size of a submatrix of \eqref{eq:fullSylv}.
\end{rk}

\begin{ex}\label{IsTight}
Suppose $d=19$ and $k=6$. Then the minimal bigraded first syzygies are of bidegree (with exponent denoting the rank in that bidegree)
$$\begin{array}{ccccc}
(6, 13), &(8, 68), &(8, 69), &(9, 43)^2, &(9, 44)^2, \\
(10, 26), &(10,37)^5, &(11, 25)^4, &(12, 24), &(16, 24)^{12}, \\
(17, 23)^{10}, &(18, 23), &(19, 22)^8, &(20, 22),& (22, 21)^5, \\
(23, 21)^2, &(28, 20)^5,  &(41, 19)^2, &(42, 19), &(108, 18) 
\end{array}$$
The minimal first syzygy of bidegree $(108,18)$ has degree $126$ in the $\Z$-grading, so 
\[
\reg M(f_{6,13}) \mbox{ is }\ge 124.
\]
\pagebreak

\noindent The betti table (see \cite{Eis2}) for the $\Z$-graded minimal free resolution for $M(f_{6,13})$ is
\begin{verbatim}
            0 1  2   3  4
     total: 1 4 66 107 44
         0: 1 .  .   .  .
          : . .  .   .  .
        17: . 4  1   .  .
          : . .  .   .  .
        34: . .  6   4  1
          : . .  .   .  .
        38: . . 22  33 10
        39: . .  9  18  9
        40: . .  1   2  1
        41: . .  5  10  5
        42: . .  2   4  2
        43: . .  .   .  .
        44: . .  .   .  .
        45: . .  5   6  .
        46: . .  5  10  5
        47: . .  .   .  .
        48: . .  .   .  .
        49: . .  .   .  1
        50: . .  2   4  2
        51: . .  2   4  2
          : . .  .   .  .
        58: . .  2   4  2
        59: . .  1   2  1
          : . .  .   .  .
        74: . .  1   2  1
        75: . .  1   2  1
          : . .  .   .  .
       124: . .  1   2  1
\end{verbatim}
Note that the bound on $d$ is achieved. To save space we have replaced large empty stretches in the table with
an unlabelled row beginning with {\tt :}
\end{ex}

\noindent{\bf Closing Remarks and Questions}:
Theorem~\ref{Codim1SurfReg} illustrates that the additional algebraic
structure of multigraded hypersurfaces provides leverage to obtain
interesting results on the regularity of $M(f)$, and we are working on a sequel to this paper to investigate this question. We thank Carlos D'Andrea for pointing out references \cite{Spo} and \cite{Vasc} to us, and an anonymous referee for helpful comments.
Computations in {\tt Macaulay 2} \cite{EGS} and {\tt Singular}
\cite{DGPS} provided evidence for our work.
\pagebreak

\end{document}